\newcommand{\Z}{\mathbb Z}
\newcommand{\su}{\subseteq}
\newtheorem{thm}{Theorem}
\newtheorem{lem}{Lemma}
\newtheorem{cor}{Corollary}
\newtheorem{obs}{Observation}
\theoremstyle{definition}
\newtheorem{defn}{Definition}
\author{Santanu Mondal, Krishnendu Paul, Shameek Paul%
\thanks{E-mail addresses: \texttt{saanmondal008@gmail.com, krishnendu.paul30@gmail.com, shameek.paul@rkmvu.ac.in}}}
\date{}
\begin{document}
\baselineskip=14.5pt

\title {Extremal sequences for the unit-weighted Gao constant of $\Z_n$}

\maketitle

\begin{abstract}
For $A\su\Z_n$, the $A$-weighted Gao constant $E_A(n)$ is defined to be the smallest natural number $k$, such that any sequence of $k$ elements in $\Z_n$ has a subsequence of length $n$, whose $A$-weighted sum is zero. Sequences of length $E_A(n)-1$ in $\Z_n$, which do not have any $A$-weighted zero-sum subsequence of length $n$ are called $A$-extremal sequences for the Gao constant. Such a sequence which has $n-1$ zeroes is said to be of the standard type. When $A=U(n)$, is the set of units in $\Z_n$, where $n$ is odd, we characterize all such sequences and show that they are of the standard type. When $n$ is even, we give examples of such sequences which are not of the standard type. We also characterize the $U(n)$-extremal sequences for the Gao constant, when $n=2^rp$, where $p$ is an odd prime.
\end{abstract}

\bigskip

Keywords: Gao constant, Davenport constant, Units in $\Z_n$, weighted zero-sum sequence

\vspace{.7cm}

\section{Introduction}\label{0}

\begin{defn}

Let $R$ be a ring and let $A\su R$. A subsequence $T$ of a sequence $S:(x_1,x_2,\ldots, x_{k})$ in $R$ is called an {\it $A$-weighted zero-sum subsequence} if the set $I:=\{i:x_i\in T\}$ is non-empty and $\forall~i\in I,~\exists~a_i\in A$ such that $\sum_{i\in I} a_i x_i = 0$. 
\end{defn}

\begin{defn} 
Given a ring $R$ and a subset $A\su R$, the $A$-weighted Davenport constant $D_A(R)$ is the least positive integer $k$ such that any sequence in $R$ of length $k$ has an $A$-weighted zero-sum subsequence. 
\end{defn}

\begin{defn}
Given a ring $R$ and a subset $A\su R$, the $A$-weighted Gao constant $E_A(R)$ is the least positive integer $k$ such that any sequence in $R$ of length $k$ has an $A$-weighted zero-sum subsequence of length $|R|$. 
\end{defn}

We denote the ring $\Z/n\Z$ by $\Z_n$. For a divisor $m$ of $n$, we define the natural map $\Z_n\to\Z_m$ to be the map which sends $x+n\Z\mapsto x+m\Z$. Let $U(n)$ denote the group of units in $\Z_n$.  

\smallskip

When $A\su \Z_n$, we denote the constants $D_A(\Z_n)$ and $E_A(\Z_n)$ by $D_A(n)$ and $E_A(n)$ respectively. From Theorem 1.2 of \cite{YZ}, we have $E_A(n)=D_A(n)+n-1$.         

\smallskip

\begin{defn}
Let $A\su \Z_n$. A sequence in $\Z_n$ of length $E_A(n)-1$ which does not have any $A$-weighted zero-sum subsequence of length $n$, is called an $A$-extremal sequence for the Gao constant. A sequence in $\Z_n$ of length $D_A(n)-1$ which does not have any $A$-weighted zero-sum subsequence, is called an $A$-extremal sequence for the Davenport constant. 
\end{defn}

\begin{defn}
Let $A$ be a subgroup of $U(n)$ and let $S:(x_1,\ldots,x_k)$ and $T:(y_1,\ldots,y_k)$ be sequences in $\Z_n$. We say that $S$ and $T$ are $A$-equivalent if there is a unit $c\in U(n)$, a permutation $\sigma\in S_k$ and we can find $a_1,\ldots,a_k\in A$ such that for $1\leq i\leq k$, we have $c\,y_{\sigma(i)}=a_ix_i$. 
\end{defn}

{\it Remark}: If $S$ is an $A$-extremal sequence for the Gao (resp. Davenport) constant and if $S$ and $T$ are $A$-equivalent, then $T$ is also an $A$-extremal sequence for the Gao (resp. Davenport) constant. 

\smallskip

From Theorem 1.3 of \cite{G} or from Theorem 1 of \cite{L},  $E_{U(n)}(n)=n+\Omega(n)$, for any $n$. So, if $S$ is a $U(n)$-extremal sequence for the Gao constant, then $S$ has length $n-1+\Omega(n)$. 

\smallskip

For $n$ odd, it was shown in Theorem 6 of \cite{AMP}, that a sequence in $\Z_n$ is a $U(n)$-extremal sequence for the Davenport constant if and only if it is $U(n)$-equivalent to a sequence of the following form:
$$(\,b_1,\,p_1b_2,\,p_1p_2b_3,\,\ldots ,\,p_1p_2\cdots p_{k-1}b_k\,)$$
where for $1\leq i\leq k$, $p_i$ is a prime and $b_i$ is coprime to $p_i$ and $n=p_1\ldots p_k$.

\smallskip

When $n=p^r$, where $p$ is an odd prime, in Theorem 3 of \cite{AMP}, it was shown that a sequence in $\Z_n$ is a $U(n)$-extremal sequence for the Gao constant if and only if it is $U(n)$-equivalent to the sequence 

$$(\,1,\,p,\,p^2,\,\ldots,\,p^{r-1},\,\overbrace{0,\,\ldots,\,0}^\text{$n-1$ times}\,)$$ 

In this article, we have proved the following results: 

\begin{itemize}

\item If $n$ is odd, a sequence in $\Z_n$ is a $U(n)$-extremal sequence for the Gao constant if and only if it is of the standard type, i.e., it has $n-1$ zeroes. 

\item For any even $n$, we give examples of $U(n)$-extremal sequences for the Gao constant which are not of the standard type. 

\item For $n=2^rp$, where $p$ is an odd prime, we characterize the $U(n)$-extremal sequences for the Gao constant.  
\end{itemize}

\section{When $n$ is any odd number}

For the next theorem, we need the following (\cite{G}, Lemma 2.1 (ii)), which we restate here using our terminology:

\begin{lem}\label{gri}
Let $A=U(p^r)$, where $p$ is an odd prime. If a sequence $S$ over $\Z_{p^r}$ has at least two terms coprime to $p$, then $S$ is an $A$-weighted zero-sum sequence. 
\end{lem}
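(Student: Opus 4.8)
The plan is to prove the (slightly stronger) statement that the \emph{entire} sequence $S$ admits unit weights summing to zero. Write $S:(x_1,\ldots,x_k)$ and let $x_1,x_2$ be two of its terms coprime to $p$; I will produce $a_1,\ldots,a_k\in A$ with $\sum_{i=1}^k a_ix_i=0$. The idea is to treat $a_1,a_2$ as free parameters while fixing all remaining weights arbitrarily, say $a_j=1$ for $j\ge 3$. Setting $t:=-\sum_{j\ge 3}x_j$, the task reduces to solving $a_1x_1+a_2x_2=t$ with $a_1,a_2\in U(p^r)$. Since $x_1,x_2$ are units, as $a_1$ and $a_2$ range over $U(p^r)$ the products $a_1x_1$ and $a_2x_2$ each range over all of $U(p^r)$; hence it suffices to show that \emph{every} element of $\Z_{p^r}$ is a sum of two units.

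For this key claim, fix a target $t\in\Z_{p^r}$ and look for units $u,v$ with $u+v=t$. Writing $v=t-u$, the requirement is that $u\not\equiv 0\pmod p$ (so that $u$ is a unit) and $u\not\equiv t\pmod p$ (so that $v=t-u$ is a unit). Thus $u$ need only avoid at most two residue classes modulo $p$. Because $p$ is odd, there are $p\ge 3$ residue classes available, so at least one admissible class remains; choosing any $u\in\Z_{p^r}$ in such a class yields units $u$ and $v=t-u$ with $u+v=t$, proving the claim.

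Finally, with $t=u+v$ as above, I set $a_1:=u\,x_1^{-1}$ and $a_2:=v\,x_2^{-1}$, which lie in $U(p^r)$ precisely because $x_1,x_2$ are units; then $a_1x_1+a_2x_2=u+v=t$, so $\sum_{i=1}^k a_ix_i=0$ and $S$ is an $A$-weighted zero-sum sequence. (The degenerate case $k=2$, where $t=0$, is covered since $0=u+(-u)$ is a sum of two units.)

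The only real content is the counting argument of the second paragraph, and this is exactly where oddness of $p$ is indispensable: for $p=2$ the two forbidden classes $\{0,\,t\bmod 2\}$ can exhaust all residues when $t$ is odd, and indeed a sum of two units in $\Z_{2^r}$ is always even, so units are not representable. I expect no further obstacle, since the reduction to the two distinguished units is immediate and the remaining terms enter only through the constant $t$.
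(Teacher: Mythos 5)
The paper does not prove this lemma at all --- it is quoted from Griffiths (\cite{G}, Lemma 2.1 (ii)) without proof --- so there is no in-paper argument to compare against. Your proof is correct and complete: the reduction to the two distinguished unit terms, the key claim that every element of $\Z_{p^r}$ is a sum of two units when $p$ is odd (via avoiding at most two residue classes modulo $p\ge 3$), and the handling of the $k=2$ case are all sound, and this is essentially the standard argument for the cited result.
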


The next result is Lemma 12 of \cite{SKS3}. 

\begin{lem}\label{lifts}
Let $S:(x_1,\ldots,x_k)$ be a sequence in $\Z_n$, let $d$ be a proper divisor of $n$ which divides every element of $S$ and let $n'=n/d$. For $1\leq i\leq k$, let $x_i'$ denote the image of $x_i/d$ under the natural map $f:\Z_n\to\Z_{n'}$. Let $S':(x_1',\ldots,x_k')$. Let $A\su\Z_n$ and let $A'\su f(A)$. Suppose $S'$ is an $A'$-weighted zero-sum sequence. Then $S$ is an $A$-weighted zero-sum sequence.  
\end{lem}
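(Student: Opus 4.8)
The plan is to re-use, in $\Z_n$, the very index set and (lifted) weights that witness $S'$ being an $A'$-weighted zero-sum sequence. Since $S'$ is an $A'$-weighted zero-sum sequence, there is a non-empty $I\su\{1,\ldots,k\}$ and weights $a_i'\in A'$ for $i\in I$ with $\sum_{i\in I}a_i'x_i'=0$ in $\Z_{n'}$. Because $A'\su f(A)$, each $a_i'$ has a preimage $a_i\in A$ under $f$, i.e. $f(a_i)=a_i'$. I claim that the same set $I$, together with the weights $a_i$, exhibits $S$ as an $A$-weighted zero-sum sequence; everything then reduces to checking that $\sum_{i\in I}a_ix_i=0$ in $\Z_n$.

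First I would fix integer representatives to make the division by $d$ explicit. For each $i\in I$ choose $m_i\in\Z$ with $x_i=d\,m_i$ in $\Z_n$ (possible since $d$ divides $x_i$), so that $x_i'=f(m_i)$, i.e. $m_i\equiv x_i'\pmod{n'}$. Likewise pick integer lifts $\wt a_i$ of the $a_i$; since $f(a_i)=a_i'$ we have $\wt a_i\equiv a_i'\pmod{n'}$. The hypothesis $\sum_{i\in I}a_i'x_i'=0$ in $\Z_{n'}$ then reads $\sum_{i\in I}\wt a_i\,m_i\equiv 0\pmod{n'}$, so $\sum_{i\in I}\wt a_i\,m_i=n'\,t$ for some integer $t$.

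The computation that finishes the proof is driven by the identity $n=d\,n'$. Working with integer lifts,
\[
\sum_{i\in I}\wt a_i\,x_i \;=\; \sum_{i\in I}\wt a_i\,(d\,m_i)\;=\;d\sum_{i\in I}\wt a_i\,m_i\;=\;d\,n'\,t\;=\;n\,t,
\]
so $\sum_{i\in I}a_ix_i=0$ in $\Z_n$. As $I$ is non-empty and each $a_i\in A$, this shows $S$ is an $A$-weighted zero-sum sequence.

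There is no serious obstacle here; the only care needed is the bookkeeping around well-definedness. One must note that $x_i/d$ is well-defined as an element of $\Z_{n'}$ (two solutions of $x_i=d\,y$ in $\Z_n$ differ by a multiple of $n'$, which $f$ kills), and one must use the containment $A'\su f(A)$ to lift the weights $a_i'$ back into $A$. Once the representatives are chosen, the factor $d$ is exactly what converts a congruence modulo $n'$ into a congruence modulo $n$, which is the whole content of the lemma.
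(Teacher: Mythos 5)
Your proof is correct: the well-definedness of $x_i/d$ in $\Z_{n'}$, the lifting of the weights via $A'\su f(A)$, and the computation $d\cdot n'\,t=n\,t$ are exactly the points that need checking, and you handle all of them. The paper itself gives no proof of this lemma (it is quoted as Lemma 12 of \cite{SKS3}), and your argument is the standard one that the citation is standing in for, so there is nothing further to compare.
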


\begin{lem}\label{star}
Suppose $n=p_1\ldots p_k$, where the $p_i$s are primes. Let 
$$\hspace{1cm} S:(\,b_1,\,p_1b_2,\,p_1p_2b_3,\,\ldots ,\,p_1p_2\cdots p_{k-1}b_k,\,\overbrace{0,\,\ldots,\,0}^\text{$n-1$ times}\,)\hspace{2cm}{(\star)}$$
where for $1\leq i\leq k$, $b_i$ is coprime to $p_i$. Then, $S$ is a $U(n)$-extremal sequence for the Gao constant. 
\end{lem}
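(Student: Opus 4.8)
The plan is to exploit the fact that $S$ is the concatenation of a short ``core'' and a block of $n-1$ zeroes, and to reduce the Gao-extremality of $S$ to the Davenport-extremality of its core. Write the core as $S_0:(c_1,\ldots,c_k)$ where $c_i=p_1\cdots p_{i-1}b_i$ (so $c_1=b_1$). First I would check the length: since $n=p_1\cdots p_k$ we have $\Omega(n)=k$, so $S$ has length $k+(n-1)=n+\Omega(n)-1=E_{U(n)}(n)-1$, which is exactly the length that a $U(n)$-extremal sequence for the Gao constant must have. Thus it remains only to show that $S$ has no $U(n)$-weighted zero-sum subsequence of length $n$.

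For this, the main observation is that the $n-1$ zeroes contribute nothing to any weighted sum. Concretely, let $T$ be any subsequence of $S$ of length $n$, with index set $I$ and weights $a_i\in U(n)$. Since $S$ contains only $n-1$ zeroes, $T$ must contain at least one term of the core; let $J\su\{1,\ldots,k\}$ be the non-empty set of core indices appearing in $T$. As $a_i\cdot 0=0$, the weighted sum of $T$ equals $\sum_{i\in J}a_ic_i$. Hence if $T$ were a $U(n)$-weighted zero-sum subsequence, then $\{c_i:i\in J\}$ would be a non-empty $U(n)$-weighted zero-sum subsequence of the core $S_0$. So it suffices to prove that $S_0$ has \emph{no} non-empty $U(n)$-weighted zero-sum subsequence.

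This last fact is immediate from the characterization quoted above: $S_0$ is literally of the form $(b_1,p_1b_2,\ldots,p_1\cdots p_{k-1}b_k)$ with each $b_i$ coprime to $p_i$, so by Theorem 6 of \cite{AMP} it is a $U(n)$-extremal sequence for the Davenport constant, and therefore by definition has no non-empty $U(n)$-weighted zero-sum subsequence. Alternatively, one can argue directly by $p$-adic valuations: suppose $\sum_{i\in J}a_ic_i=0$ in $\Z_n$ with $J$ non-empty, and let $j=\min J$. Lifting the units $a_i$ to integers coprime to $n$, the $p_j$-adic valuation of the $j$-th summand equals the number of indices $i<j$ with $p_i=p_j$, while every summand with index exceeding $j$ has strictly larger $p_j$-valuation; hence the valuation of the whole sum equals that of the $j$-th summand, which is strictly less than the exponent of $p_j$ in $n$. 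Then $n$ cannot divide the integer sum, a contradiction.

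The routine parts are the length count and the remark that the zeroes drop out of every weighted sum; the only genuine content is the no-weighted-zero-sum property of the core. Invoking Theorem 6 of \cite{AMP} disposes of it at once, so the real question is just whether to cite or to reprove. If one reproves it, the point requiring a little care is the bookkeeping of $p_j$-adic valuations when the primes $p_1,\ldots,p_k$ are repeated; the ``unique minimal valuation at $j=\min J$'' argument handles this cleanly and, incidentally, works for arbitrary $n$, not only odd $n$.
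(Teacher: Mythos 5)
Your argument is correct and is essentially the paper's own proof: the paper likewise observes that a length-$n$ subsequence must contain a core term (there being only $n-1$ zeroes) and then rules out each core term as the first term of such a subsequence by exactly the divisibility-by-$p_1\cdots p_i$ argument, which is your minimal-index $p_j$-adic valuation computation. One small caution: the lemma is later invoked for even $n$ as well, so the route through Theorem 6 of \cite{AMP} (stated only for odd $n$) would not suffice by itself, and your direct valuation argument is the one to keep.
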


\begin{proof}
Let $T$ be a $U(n)$-weighted zero-sum subsequence of $S$ of length $n$. Suppose the first term of $T$ is the $i^{th}$ term of $S$, where $1\leq i\leq k$. Then, $p_1\ldots p_i$ divides all the other terms of $T$. So, we get the contradiction that $b_i$ is divisible by $p_i$. Thus, the first term of $T$ cannot be any of the first $k$ terms of $S$. As $T$ has length $n$, so, the first term of $T$ cannot be any of the remaining $n-1$ terms of $S$. Thus, $T$ cannot exist. 
\end{proof}

\begin{thm}\label{odde}
Let $n$ be odd. Then a sequence in $\Z_n$ is a $U(n)$-extremal sequence for the Gao constant if and only if it is $U(n)$-equivalent to a sequence having the form $(\star)$. 

\end{thm}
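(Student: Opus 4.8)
The plan is to treat the two directions separately. The ``if'' direction is immediate: Lemma \ref{star} shows that any sequence of the form $(\star)$ is a $U(n)$-extremal sequence for the Gao constant, and by the Remark following the definition of $U(n)$-equivalence this property is preserved under $U(n)$-equivalence. For the ``only if'' direction, let $S$ be a $U(n)$-extremal sequence, so $S$ has length $n-1+\Omega(n)$, and write $k=\Omega(n)$. I would first reduce the whole statement to the claim that $S$ has at least $n-1$ zeroes. Since $S$ cannot have $n$ or more zeroes (those would already form a $U(n)$-weighted zero-sum subsequence of length $n$), this claim pins the number of zeroes to exactly $n-1$. Letting $S'$ denote the subsequence of the remaining $k$ terms, any $U(n)$-weighted zero-sum subsequence of $S'$ of some length $\ell$ (with $1\le\ell\le k\le n$) could be padded by $n-\ell$ of the $n-1$ zeroes into a length-$n$ weighted zero-sum subsequence of $S$, contradicting extremality. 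Hence $S'$ is $U(n)$-weighted-zero-sum-free of length $k=D_{U(n)}(n)-1$, i.e.\ $U(n)$-extremal for the Davenport constant, so Theorem 6 of \cite{AMP} forces $S'$ to be $U(n)$-equivalent to $(b_1,p_1b_2,\ldots,p_1\cdots p_{k-1}b_k)$; adjoining the $n-1$ zeroes shows $S$ is $U(n)$-equivalent to $(\star)$.

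The heart of the argument is therefore the claim that a $U(n)$-extremal Gao sequence has at most $k$ nonzero terms, which I would prove in contrapositive form: if $S$ has $m\ge k+1$ nonzero terms, then $S$ has a $U(n)$-weighted zero-sum subsequence of length $n$. Two tools drive this. First, the Chinese Remainder Theorem: since $n$ is odd, $\Z_n\cong\prod_p\Z_{p^{e_p}}$ and $U(n)\cong\prod_p U(p^{e_p})$ with the weight-components independent, so a subsequence $T$ is a $U(n)$-weighted zero-sum if and only if its reduction modulo each $p^{e_p}$ is a $U(p^{e_p})$-weighted zero-sum. Second, a valuation refinement of Lemma \ref{gri}: a sequence over $\Z_{p^e}$ is a $U(p^e)$-weighted zero-sum exactly when its minimal $p$-adic valuation is attained at least twice (the all-zero sequence being counted as attaining valuation $e$ many times). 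Combining these, it suffices to select $n$ of the terms of $S$ so that, for every prime $p\mid n$, the minimum $p$-adic valuation among the selected terms is attained at least twice.

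To produce such a selection I would induct on $\Omega(n)$, proving the full equivalence, with the base case $n=p$ a direct check over the field $\Z_p$. Call a prime $p\mid n$ \emph{good} if at least two of the currently available nonzero terms attain the minimal $p$-valuation, and \emph{bad} otherwise (a unique minimizing term). If every prime is good, one selects all nonzero terms (or, when $m>n$, a subset of size $n$ retaining two minimizers per prime, which is possible since $2\,\Omega(n)\le n$) and pads with zeroes; the valuation criterion then yields a length-$n$ weighted zero-sum directly. If some prime $p$ is bad, I would set aside its unique exceptional term, note that all remaining terms are divisible by $p$, and apply Lemma \ref{lifts} with $d=p$ to pass to $\Z_{n/p}$, where the target length $n$ is unchanged while $\Omega$ drops by one. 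A pigeonhole count makes the induction terminate favourably: each nonzero term is exceptional for exactly one prime along this reduction, so if every step were bad we would account for at most $k$ nonzero terms, contradicting $m\ge k+1$; thus the process must reach a good, constructible configuration while still long enough.

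The main obstacle is precisely this bookkeeping at the boundary. When $n$ is squarefree and, along the reduction, exactly one surviving term is coprime to each factored prime, the naive length count falls short by one at the final stage; this is the very phenomenon that makes the genuine sequences $(\star)$ extremal, since there the forced exclusion of exceptional terms cascades through all $k$ nonzero terms and leaves only the $n-1$ zeroes, one short of length $n$. The role of the hypothesis $m\ge k+1$ is to break exactly this cascade through the pigeonhole above, while the role of the inductive hypothesis is to identify the residual subsequence of extremal length over $\Z_{n/p}$ as being of standard type, which supplies the missing block. Getting these two counts to meet, so that the reduction never exhausts its length before reaching a good prime, is the step I expect to demand the most care.
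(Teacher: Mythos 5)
Your route is genuinely different from the paper's. The paper proves the ``only if'' direction by building $(\star)$ directly: it finds a prime $p$ with at most one term of $S$ coprime to $p$ (via Lemma \ref{gri} and $2\omega(n)<n$), kills the case ``$p$ divides every term'' by extracting $p$ disjoint $U(n/p)$-weighted zero-sum subsequences of length $n/p$ from the quotient sequence (using $E_{U(n/p)}(n/p)=n/p+\Omega(n/p)$) and gluing them into one of length $n$, then peels off the unique term coprime to $p$ and iterates. You instead reduce everything to the claim that $S$ has exactly $n-1$ zeroes and quote the Davenport characterization of \cite{AMP}; that reduction (pad a short weighted zero-sum subsequence of the nonzero part with zeroes) is clean and correct, and your valuation criterion together with the CRT splitting (Observation \ref{obs} in the paper) is a legitimate substitute for the paper's machinery.

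The gap is in the termination of your good/bad reduction. As written, ``if every step were bad we would account for at most $k$ nonzero terms, contradicting $m\ge k+1$'' is not a contradiction: nothing in what you say prevents $k$ nonzero terms from being discarded one per step while the $(k+1)$-st survives uselessly to the stage where the modulus is trivial and only $n-1$ terms remain. What actually closes the argument is an invariant you never state: each bad step discards exactly one term that is nonzero in the current quotient ring, and a nonzero term divisible by $p$ remains nonzero after dividing by $p$ and passing to $\Z_{n/p}$. Hence after $j$ bad steps the current sequence still has at least $m-j\ge\Omega(n_j)+1$ nonzero terms; in particular once $\Omega$ has dropped to $1$ there are at least two units modulo the surviving prime, so that stage is forced to be good, and it still offers $n-1+k-(k-1)=n$ terms to select from. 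You also need $2\omega(\cdot)\le n$ to protect two minimizers per prime when trimming to length $n$ (the paper uses the same inequality). Finally, the framing ``induct on $\Omega(n)$, proving the full equivalence'' does not fit your own reduction: after dividing by $p$ the target length is still $n$, not $n/p$, so the iteration must be run on the auxiliary claim about length-$n$ weighted zero-sum subsequences rather than on the theorem, and no ``inductive hypothesis supplying a missing block'' is needed once the invariant above is in place.
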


\begin{proof}
From Lemma \ref{star}, a sequence which is $U(n)$-equivalent to a sequence having the form $(\star)$ is a $U(n)$-extremal sequence for the Gao constant. 

\smallskip

Let $S:(x_1,\,\ldots,\,x_l)$ be a $U(n)$-extremal sequence for the Gao constant. Suppose for each prime divisor $p$ of $n$, at least two terms of $S$ are coprime to $p$. As $2\omega(n)<n$, so we can find a subsequence $T$ of $S$ of length $n$ such that for each prime divisor $p$ of $n$, at least two terms of $T$ are coprime to $p$. As $n$ is odd, so, by Lemma \ref{gri}, we get the contradiction that $T$ is a $U(n)$-weighted zero-sum subsequence of $S$ of length $n$. Thus, there is a prime divisor $p$ of $n$, such that at most one term of $S$ is coprime to $p$.  

\smallskip

Suppose $p$ divides all the terms of $S$. Let $n'=n/p$, $A'=U(n')$ and let $f:\Z_n\to\Z_{n'}$ be the natural map. Let $S':(x_1',\,\ldots,\,x_l')$ denote the sequence in $\Z_{n'}$ where $x_i'=f(x_i/p)$, for $1\leq i\leq l$. As $E_{A'}(n')=n'+\Omega(n')$ and as the length of $S'$ is $n-1+\Omega(n)=(p-1)n'+n'+\Omega(n')$, so, $S'$ contains $p$ disjoint subsequences $S_1',\ldots,S_p'$ which are $A'$-weighted zero-sum sequences of length $n'$. So, their union $T'$ is an $A'$-weighted zero-sum subsequence of $S'$ of length $n$. Thus, by Lemma \ref{lifts}, we get the contradiction that $S$ has an $A$-weighted zero-sum subsequence of length $n$. 

\smallskip

Thus, there is exactly one term of $S$ which is coprime to $p$, say $x_1$. By repeating the arguments in the above two paragraphs for the sequence $(x_2',\ldots,x_l')$, we see that there is a prime divisor $p'$ of $n'$ such that exactly one term of this sequence, say $x_2'$ is coprime to $p'$. Let us denote $p$ by $p_1$ and $p'$ by $p_2$. Let $k=\Omega(n)$. For $2\leq i\leq k$, we get prime divisors $p_i$ of $n$, such that $x_i/(p_1\ldots p_{i-1})$ is coprime to $p_i$ and $p_1p_2\ldots p_i$ divides $x_j$ for $j>i$. Thus, $x_j=0$ for all $j>k$. Hence, $S$ is $U(n)$-equivalent to a sequence having the form $(\star)$.  
\end{proof}

\begin{defn}
An $A$-extremal sequence for the Gao constant which has $n-1$ zeroes, is said to be of the standard type. 
\end{defn}

Remark: For $A\su\Z_n$, $E_A(n)=n-1+D_A(n)$. So, if $S$ is an $A$-extremal sequence for the Gao constant which is of the standard type, then the subsequence consisting of the non-zero terms of $S$ will be an $A$-extremal sequence for the Davenport constant. By Lemma \ref{star}, a sequence having the form $(\star)$ is a $U(n)$-extremal sequence for the Gao constant. Such a sequence is of the standard type. 

\smallskip

However, when $n$ is even, there are $U(n)$-extremal sequences for the Gao constant which are not of the standard type.

\section{When $n$ is any even number}

The next result is Theorem 10 of \cite{SKS5}.

\begin{thm}
Let $n=2^r$ and $r\geq 2$. A sequence in $\Z_n$ is a $U(n)$-extremal sequence for the Gao constant if and only if it is $U(n)$-equivalent to a sequence $S$ of length $n+r-1$, such that, for each $i$ with $0\leq i\leq r-2$, $2^i$ occurs exactly once as a term of $S$, $2^{r-1}$ occurs an odd number of times as a term of $S$ and the remaining terms of $S$ are zero. 
\end{thm}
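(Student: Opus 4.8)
The plan is to reduce the whole statement to a purely combinatorial question about the multiplicities of the $2$-adic valuations, and then settle that question by a parity-and-counting argument. First I would normalize $S$ using $U(n)$-equivalence: since every nonzero element of $\Z_{2^r}$ is a unit times $2^j$ for a unique $0\le j\le r-1$, I may assume each term of $S$ equals some $2^j$ or $0$. Let $c_j$ be the number of terms equal to $2^j$ (for $0\le j\le r-1$) and let $c_\infty$ be the number of zeros, so $\sum_j c_j + c_\infty = n+r-1$. Being $U(n)$-extremal means precisely that $S$ has no $U(n)$-weighted zero-sum subsequence of length $n$; selecting $n$ zeros shows at once that $c_\infty\le n-1$.

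The key step is a clean criterion for when a given subselection can be weighted to zero. Given $0\le m_j\le c_j$ and $0\le m_\infty\le c_\infty$ with $\sum_j m_j + m_\infty = n$ and not all $m_j$ zero, set $\mu=\min\{j:m_j\ge1\}$. I claim one can choose units $a_i$ making the weighted sum zero if and only if $m_\mu$ is even. This follows from a $2$-adic computation: a sum of $m_j$ odd units realizes every residue mod $2^r$ of parity $m_j$, so the attainable weighted sums are the $\sum_j 2^j s_j$ with $s_j$ ranging over all residues of parity $m_j$ (and $s_j=0$ when $m_j=0$); writing $s_j=(m_j\bmod 2)+2t_j$, the free part $\sum 2^{j+1}t_j$ sweeps out $2^{\mu+1}\Z_{2^r}$, and $0$ is attained iff $2^{\mu}(m_\mu\bmod 2)\equiv0\pmod{2^{\mu+1}}$, i.e. iff $m_\mu$ is even. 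This converts extremality into the statement that in every admissible length-$n$ selection the least occupied valuation is used an \emph{odd} number of times.

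With this criterion the ``if'' direction is short. In the stated form $2^j$ occurs once for $j\le r-2$ and $2^{r-1}$ an odd number of times, so in any length-$n$ selection the lowest occupied valuation is used an odd number of times: if that valuation is $\le r-2$ the count is forced to be $1$, while a selection using only copies of $2^{r-1}$ must use all $c_{r-1}$ of them (the terms of valuation $\ge r-1$ number exactly $c_{r-1}+(n-c_{r-1})=n$), an odd number. Hence no weighted zero-sum of length $n$ exists.

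The main work is the converse. Fixing the least occupied valuation to be $j$ (so $m_i=0$ for $i<j$ and $m_j\ge1$), the totals attainable with a given $m_j$ form the contiguous interval $[m_j,\,m_j+\sum_{i>j}c_i]$; intersecting with the admissible window $[n-c_\infty,\,n]$ and using $\sum_i c_i+c_\infty=n+r-1$, the values of $m_j$ that extend to a genuine length-$n$ selection are exactly $m_j\in[\max(1,\,C_j-r+1),\,\min(c_j,n)]$, where $C_j=c_0+\cdots+c_j$. Extremality forces every integer in this interval to be odd, so it is a single odd point (or empty), and level by level this yields $c_j=1$ whenever $C_{j-1}\le r-2$ and $c_j$ odd when $C_{j-1}=r-1$. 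A short monotonicity/budget count then finishes it: these constraints give $C_j\le j+1$ for $j\le r-2$, whereas $c_\infty\le n-1$ forces $C_{r-1}\ge r$, and within the $r$ available levels the only reconciliation is $c_0=\cdots=c_{r-2}=1$ (no level may be skipped, else $C_{r-1}\le r-1$) together with $c_{r-1}$ odd, leaving the remaining $n-c_{r-1}$ terms zero. The delicate points I expect to fight with are exactly this bookkeeping: ruling out skipped intermediate levels, and handling the boundary cases $c_j>n$ and $C_{j-1}\ge r$, where the admissible interval degenerates and one must check it creates no slack against the fixed length $n+r-1$.
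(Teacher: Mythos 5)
This theorem is not proved in the paper at all: it is imported verbatim as Theorem 10 of \cite{SKS5}, so there is no in-paper argument to compare yours against. Judged on its own, your proof is correct in outline and genuinely self-contained. Your central tool --- a length-$n$ selection with least occupied $2$-adic valuation $\mu$ admits a unit weighting summing to zero if and only if $m_\mu$ is even --- is the right criterion, and your $2$-adic derivation of it is sound (a sum of $m$ units of $\Z_{2^r}$ realizes exactly the residues of parity $m$, and the free part sweeps $2^{\mu+1}\Z_{2^r}$). Note that this criterion is precisely a sharpening of the quoted Lemma~\ref{2r0} (Lemma 1(ii) of \cite{L}), which is only the case $\mu=0$ of your statement and is the ingredient the cited source builds on; your version packages the whole extremality condition into ``every admissible selection uses its least occupied level an odd number of times,'' after which the converse becomes pure interval combinatorics on the counts $c_j$. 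The bookkeeping you flag does close: the induction $C_j\le j+1$ for $j\le r-2$ goes through because whenever $C_{j-1}\le r-2$ and $c_j\ge 2$ the admissible interval for $m_j$ contains two consecutive integers (the boundary case $c_j>n$ is killed because the interval would then be nonempty with even right endpoint $n$); then $c_\infty\le n-1$ gives $C_{r-1}\ge r$, the same argument applied at level $r-1$ rules out $C_{r-2}\le r-2$, and $C_{r-2}=r-1$ forces $c_0=\cdots=c_{r-2}=1$ with $c_{r-1}$ odd. Two small imprecisions worth fixing in a written version: the constraint you extract at levels with $C_{j-1}\le r-2$ is $c_j\le 1$, not $c_j=1$ (levels could a priori be empty, and it is the level-$(r-1)$ analysis that excludes this), and your parenthetical ``else $C_{r-1}\le r-1$'' should be spelled out as: a skipped level forces $C_{r-2}\le r-2$, whence the same interval argument gives $c_{r-1}\le 1$ and $C_{r-1}\le r-1$, contradicting $c_\infty\le n-1$.
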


For example, a sequence in $\Z_8$ is a $U(8)$-extremal sequence for the Gao constant if and only if it is $U(8)$-equivalent to one of the following sequences: 

\smallskip

$$(\,1,\,2,\,4,\,0,\,0,\,0,\,0,\,0,\,0,\,0\,),~(\,1,\,2,\,4,\,4,\,4,\,0,\,0,\,0,\,0,\,0\,)$$
$$(\,1,\,2,\,4,\,4,\,4,\,4,\,4,\,0,\,0,\,0\,),~(\,1,\,2,\,4,\,4,\,4,\,4,\,4,\,4,\,4,\,0\,)$$ 

\begin{lem}\label{ast}
Suppose $n=2p_1\ldots p_k$, where the $p_i$s are primes.

\smallskip 
If $k\geq 2$, let 
$$\hspace{.5cm}S:(\,b_1,\,p_1b_2,\,p_1p_2b_3,\,\ldots,\,p_1\cdots p_{k-1}b_k,\,\overbrace {n/2,\,\ldots,\,n/2}^\text{$m$ times},\,\overbrace{\,0,\,\ldots,\,0}^\text{$n-m$ times}\,)\hspace{1cm}{(\ast)}$$
and if $k=1$ and $p_1=p$, let 
$$S:(\,b_1,\,\overbrace {p,\,\ldots,\,p}^\text{$m$ times},\,\overbrace{\,0,\,\ldots,\,0}^\text{$n-m$ times}\,)$$
where $m$ is odd and for $1\leq i\leq k$, we have $b_i$ is coprime to $p_i$. Then, $S$ is a $U(n)$-extremal sequence for the Gao constant.
\end{lem}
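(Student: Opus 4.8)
The plan is to suppose, for contradiction, that $T$ is a $U(n)$-weighted zero-sum subsequence of $S$ of length $n$, and to derive a contradiction in two stages: first showing that $T$ can contain none of the ``structured'' terms $p_1\cdots p_{i-1}b_i$, and then ruling out the surviving possibilities by a parity argument on the copies of $n/2$. I would treat the case $k\geq 2$ in detail; the case $k=1$ is the same argument with the single structured term $b_1$ playing the role of the whole chain. First I would record the length count: since $\Omega(n)=k+1$, the sequence $S$ has length $k+m+(n-m)=k+n=n-1+\Omega(n)=E_{U(n)}(n)-1$ (and $1+n$ in the case $k=1$), so $S$ has the correct length and it remains only to exclude a weighted zero-sum subsequence of length $n$.

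For the first stage I would reduce modulo $p_1\cdots p_i$, exactly as in the proof of Lemma \ref{star}. Writing the hypothesis as $\sum_{j\in I}a_jx_j\equiv 0\pmod n$ with each $a_j\in U(n)$, and noting $p_1\cdots p_i\mid n$, I would suppose the structured term of smallest index appearing in $T$ is the $i$-th term $p_1\cdots p_{i-1}b_i$. Every other term of $T$ is divisible by $p_1\cdots p_i$: the later structured terms and the copies of $n/2=p_1\cdots p_k$ all carry the factor $p_1\cdots p_i$, while the zeros are trivially divisible. Hence the congruence collapses to $a_i\,p_1\cdots p_{i-1}\,b_i\equiv 0\pmod{p_1\cdots p_i}$, forcing $p_i\mid a_ib_i$, which is impossible since $a_i$ is a unit and $b_i$ is coprime to $p_i$. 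Therefore $T$ uses none of the $k$ structured terms.

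For the second stage, $T$ now consists only of copies of $n/2$ and of zeros, and the total number of such terms available in $S$ is exactly $m+(n-m)=n$. The length constraint then forces $T$ to use \emph{all} $m$ copies of $n/2$ and all $n-m$ zeros. The weighted sum is therefore $(n/2)\,\sigma$, where $\sigma$ is a sum of $m$ units of $\Z_n$. Because $n$ is even, every unit is odd, so $\sigma\equiv m\equiv 1\pmod 2$; consequently $(n/2)\,\sigma\equiv n/2\not\equiv 0\pmod n$. This contradicts the zero-sum hypothesis, and so no such $T$ exists.

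I expect the parity step to be the crux of the argument: the construction is engineered so that the only length-$n$ subsequence surviving the divisibility argument is the one made of exactly $m$ copies of $n/2$ together with all the zeros, and the oddness of $m$ is precisely what prevents its weighted sum from vanishing. By contrast, the divisibility argument of the first stage is routine once one observes that reducing modulo $p_1\cdots p_k=n/2$ annihilates both the $n/2$-terms and the zeros, leaving only the chain to which the reasoning of Lemma \ref{star} applies verbatim.
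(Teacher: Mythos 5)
Your proof is correct and follows essentially the same route as the paper: the divisibility argument modulo $p_1\cdots p_i$ rules out the structured terms exactly as in Lemma \ref{star}, and your direct computation of the weighted sum as $(n/2)\sigma$ with $\sigma$ odd is just an unpacked version of the paper's reduction to $\Z_2$ after dividing by $n/2$. The length verification you include is a sensible addition that the paper leaves implicit.
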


\begin{proof}
Suppose $T$ is a $U(n)$-weighted zero-sum subsequence of $S$. Then, the first term of $T$ cannot be any of the first $k$ terms of $S$. We claim that $T$ cannot consist of the last $n$ terms of $S$.

\smallskip

Let $f:\Z_n\to\Z_2$ be the natural map and let $T'$ be the sequence whose terms are obtained by dividing the terms of $T$ by $n/2$ and then taking their images under $f$. The sequence $T'$ in $\Z_2$ has an odd number of non-zero terms which are all equal to one. So, we see that $T'$ cannot be a zero-sum sequence. As the map $f$ sends elements of $U(n)$ to 1, hence, $T$ cannot be a $U(n)$-weighted zero-sum sequence. 
\end{proof} 

We now give an example of a $U(n)$-extremal sequence for the Gao constant whose all terms may be non-zero, when $n$ is even.

\begin{lem}\label{2ast}
Suppose $n=2^{r+1}p_1\ldots p_k$, where the $p_i$s are odd primes. 

\smallskip

If $k\geq 2$, let $S$ be the sequence 
$$(\,a_0,\,2a_1,\,\ldots,\,2^{r-1}a_{r-1},\,c,\,2^rb_1,\,2^rp_1b_2,\,\ldots,\,2^rp_1\cdots p_{k-1}b_k,\,\overbrace {n/2,\,\ldots,\,n/2}^\text{$n-1$ times}\,)$$
and if $k=1$, let 
$$\hspace{2cm}S:(\,a_0,\,2a_1,\,\ldots,\,2^{r-1}a_{r-1},\,c,\,2^r,\,\overbrace {n/2,\,\ldots,\,n/2}^\text{$n-1$ times}\,)\hspace{1cm}{(\ast\ast)}$$ 
where for $0\leq i\leq r-1$, $a_i$ is odd, $c$ is divisible by $2^{r+1}$ and for $1\leq i\leq k$, we have $b_i$ is odd and coprime to $p_i$. Then, $S$ is a $U(n)$-extremal sequence for the Gao constant. 
\end{lem}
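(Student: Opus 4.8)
The plan is to verify first that $S$ has the required length $n-1+\Omega(n)$, and then to argue by contradiction that it has no $U(n)$-weighted zero-sum subsequence of length $n$. For the length, since $\Omega(n)=(r+1)+k$, counting the $r$ terms $a_0,\dots,2^{r-1}a_{r-1}$, the term $c$, the $k$ staircase terms, and the $n-1$ copies of $n/2$ gives $r+1+k+(n-1)=n-1+\Omega(n)$, as wanted (the case $k=1$ is counted the same way, with the single staircase term $2^r$, i.e. $b_1=1$). For the main assertion, suppose for contradiction that $T$ is a $U(n)$-weighted zero-sum subsequence of $S$ of length $n$, say $\sum_{j\in I}u_jx_j=0$ in $\Z_n$ with each $u_j\in U(n)$.

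First I would pass to $\Z_{2^{r+1}}$ via the natural map $g$. Here $g(a_0),g(2a_1),\dots,g(2^{r-1}a_{r-1})$ have $2$-adic valuations $0,1,\dots,r-1$, each valuation occurring exactly once, while $g(c)=0$ and every staircase term as well as every copy of $n/2$ maps to the valuation-$r$ element $2^r$. Applying $g$ to the relation: if $T$ contained some $2^ia_i$ with $i\le r-1$, then among the images the term of least valuation would be unique (its valuation being at most $r-1$, hence strictly below $r+1$), so $g$ of the sum would have that exact valuation and be nonzero, a contradiction. Thus $T$ contains none of $a_0,\dots,2^{r-1}a_{r-1}$, and in particular every term of $T$ is divisible by $2^r$.

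Writing each term of $T$ as $2^rm_j$ and setting $n'=p_1\cdots p_k$ (so $n/2=2^rn'$), the relation $2^r\sum u_jm_j\equiv 0\pmod{2^{r+1}n'}$ yields $\sum u_jm_j\equiv 0\pmod{2n'}$; reducing the weights modulo $2n'$ (they remain units), the reduced sequence $T''$ in $\Z_{2n'}$ is $U(2n')$-weighted zero-sum. This reduction is the crux of the argument, because in $\Z_{2n'}$ each copy of $n/2$ becomes $n'$, which is \emph{odd}, whereas the image of $c$ stays even: a parity obstruction invisible in $\Z_n$ now surfaces. Indeed, the only possibly even term of $T''$ is the image of $c$, every staircase term and every copy of $n'$ being odd. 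Reducing modulo $2$ (units go to $1$), the zero-sum condition forces the number of odd terms of $T''$ to be even; since $|T''|=n$ is even, this forces $c\notin T$.

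With $c$ removed, $T''$ has exactly the shape $(\ast)$ inside $\Z_{2n'}$, and I would finish as in Lemma \ref{ast}. If $T''$ contains a staircase term, let $i$ be the least index present; reducing modulo $p_1\cdots p_i$ annihilates every other staircase term (index $>i$) and every copy of $n'$, leaving a unit multiple of $p_1\cdots p_{i-1}b_i$, which is not divisible by $p_1\cdots p_i$ because $b_i$ is coprime to $p_i$ — contradicting the zero-sum. Hence $T''$ would consist solely of copies of $n'$; but only $n-1$ copies of $n/2$ occur in $S$, whereas $T''$ has length $n$, which is impossible. Either way we reach a contradiction, so no such $T$ exists. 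I expect the main obstacle to be precisely the term $c$, about which the hypotheses say nothing modulo the odd primes $p_i$; the device that neutralizes it is the division by $2^r$, which turns benign-looking $2$-adic information into a genuine parity constraint that ejects $c$ from any candidate zero-sum.
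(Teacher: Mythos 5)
Your proof is correct and follows essentially the same route as the paper's: you rule out the terms $2^ia_i$ by a $2$-adic valuation argument, eject $c$ via the observation that the other $n-1$ terms of $T$ would all be odd multiples of $2^r$ (an odd count, hence a weighted sum not divisible by $2^{r+1}$ — your reduction to $\Z_{2n'}$ and then mod $2$ is the same parity obstruction repackaged), eliminate the staircase terms by divisibility by $p_1\cdots p_i$, and conclude by counting the remaining $n/2$'s. The only cosmetic difference is that the paper organizes this as a case analysis on the first term of $T$ rather than excluding each block wholesale.
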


\begin{proof}
We give the proof in the case $k\geq 2$. 
Let $T$ be a $U(n)$-weighted zero-sum subsequence of $S$ of length $n$. Then, the first term of $T$ cannot be any of the first $r$ terms. Suppose the first term of $T$ is $c$. All the other terms of $T$ are of the form $2^ra$, where $a$ is odd. As $n$ is even, $T$ has an odd number (viz. $n-1$) of such terms. So, a $U(n)$-weighted sum of these terms is also of the same form and hence cannot be divisible by $2^{r+1}$. Thus, the first term of $T$ cannot be $c$. It is easy to see that the first term of $T$ cannot be any of the next $k$ terms of $S$. As there are only $n-1$ terms remaining, so, $S$ has no $U(n)$-weighted zero-sum subsequence of length $n$.
\end{proof}

\section{When $n=2p$, where $p$ is an odd prime}

We begin this section by quoting Observation 2.2 in \cite{G}. 

\begin{obs}\label{obs}
For every prime divisor $p$ of $n$, let $v_p(n)$ denote the highest power of $p$ which divides $n$. Let $S$ be a sequence in $\Z_n$. For each prime divisor $p$ of $n$, let $S^{(p)}$ denote the image of $S$ under the natural map $\Z_n\to\Z_{p^{v_p(n)}}$. Then, $S$ is a $U(n)$-weighted zero-sum sequence in $\Z_n\iff$ for every prime divisor $p$ of $n$, $S^{(p)}$ is a $U(p^s)$-weighted zero-sum sequence in $\Z_{p^s}$, where $s=v_p(n)$. 
\end{obs}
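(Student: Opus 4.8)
The plan is to reduce everything to the Chinese Remainder Theorem. Writing $n=p_1^{s_1}\cdots p_t^{s_t}$, the natural maps assemble into a ring isomorphism $\Z_n\cong\Z_{p_1^{s_1}}\times\cdots\times\Z_{p_t^{s_t}}$, under which an element is zero if and only if each of its components is zero, and where the $j$-th coordinate projection is exactly the natural map $f_{p_j}:\Z_n\to\Z_{p_j^{s_j}}$. The two facts I would record first are: (i) each $f_p$ (with $s=v_p(n)$) carries units to units, since $\gcd(a,n)=1$ forces $\gcd(a,p^s)=1$ as $p^s\mid n$; and (ii) conversely, given one unit in each factor $U(p_j^{s_j})$, the Chinese Remainder Theorem produces a single $a\in\Z_n$ reducing to the prescribed unit modulo each $p_j^{s_j}$, and this $a$ is a unit because it is a unit modulo every prime-power factor. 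Together these give the isomorphism $U(n)\cong\prod_j U(p_j^{s_j})$, i.e. the surjectivity $U(n)\twoheadrightarrow\prod_p U(p^{v_p(n)})$ that the argument hinges on.

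For the forward direction, suppose $S:(x_1,\ldots,x_k)$ is a $U(n)$-weighted zero-sum sequence, so there exist $a_1,\ldots,a_k\in U(n)$ with $\sum_i a_ix_i=0$ in $\Z_n$. Fixing a prime $p\mid n$ and applying the ring homomorphism $f_p$ gives $\sum_i f_p(a_i)f_p(x_i)=0$ in $\Z_{p^s}$. By fact (i) each $f_p(a_i)$ lies in $U(p^s)$, and the terms $f_p(x_i)$ are precisely the terms of $S^{(p)}$, so $S^{(p)}$ is a $U(p^s)$-weighted zero-sum sequence. Since $p$ was arbitrary, this direction is complete.

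For the reverse direction, assume that for each prime $p\mid n$ there are units $u_i^{(p)}\in U(p^s)$ (with $s=v_p(n)$) such that $\sum_i u_i^{(p)}f_p(x_i)=0$ in $\Z_{p^s}$. The point to watch is that these weights are chosen \emph{independently} for different primes, so the real task is to merge them into a single weight that works simultaneously across all prime-power factors. This is exactly where fact (ii) enters: for each index $i$ I would invoke the Chinese Remainder Theorem to choose $a_i\in U(n)$ with $f_p(a_i)=u_i^{(p)}$ for every prime $p\mid n$. Then $\sum_i a_ix_i$ reduces modulo each $p^s$ to $\sum_i u_i^{(p)}f_p(x_i)=0$, and since an element of $\Z_n$ vanishing modulo every prime-power factor must itself be zero, we conclude $\sum_i a_ix_i=0$ in $\Z_n$. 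Hence $S$ is a $U(n)$-weighted zero-sum sequence.

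The only genuine obstacle is this simultaneous-lifting step in the reverse direction; everything else is the routine bookkeeping of applying and inverting the Chinese Remainder Theorem. I expect no difficulty once the isomorphism $U(n)\cong\prod_p U(p^{v_p(n)})$ is established at the outset, since it makes both the passage from $\Z_n$ to the factors and the reassembly of independently chosen weights into a single unit completely transparent.
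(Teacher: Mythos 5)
Your argument is correct: the forward direction is just functoriality of the natural maps $\Z_n\to\Z_{p^{v_p(n)}}$ (which carry units to units), and the reverse direction is handled properly by using the Chinese Remainder Theorem to lift the independently chosen weights $u_i^{(p)}$ to a single unit $a_i\in U(n)$ for each index $i$ --- the one genuine point, which you correctly identify and resolve. The paper itself gives no proof of this statement (it is quoted from Observation 2.2 of Griffiths), so there is nothing to compare against; your CRT argument is the standard justification and fills that gap correctly.
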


\begin{lem}\label{w}
 Let $n=2p$, where $p$ is an odd prime. If $W$ is a sequence in $\Z_n$ such that $W^{(2)}$ has an even number of ones and $W$ doesn't have exactly one term which is coprime to $p$. Then, $W$ is a $U(n)$-weighted zero-sum sequence.
\end{lem}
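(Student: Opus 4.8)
The plan is to apply Observation \ref{obs} with $n=2p$. Since $2$ and $p$ each divide $n$ to the first power, the observation tells us that $W$ is a $U(n)$-weighted zero-sum sequence if and only if both $W^{(2)}$ is a $U(2)$-weighted zero-sum sequence in $\Z_2$ and $W^{(p)}$ is a $U(p)$-weighted zero-sum sequence in $\Z_p$. So I would verify each of these two local conditions separately, matching them to the two hypotheses.

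For the component at $2$, note that $U(2)=\{1\}$, so the only admissible weight is $1$ and $W^{(2)}$ is a $U(2)$-weighted zero-sum sequence precisely when the ordinary sum of its terms vanishes in $\Z_2$, i.e.\ when it has an even number of ones. This is exactly the first hypothesis, so this component is immediate.

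For the component at $p$, I would first observe that a term of $W$ is coprime to $p$ if and only if its image in $\Z_p$ is nonzero, so the second hypothesis says that $W^{(p)}$ has either zero or at least two nonzero terms. If $W^{(p)}$ has no nonzero term, then every term is $0$ and any choice of unit weights gives sum $0$, so $W^{(p)}$ is trivially $U(p)$-weighted zero-sum. If $W^{(p)}$ has at least two nonzero terms, then $W$ has at least two terms coprime to $p$, and applying Lemma \ref{gri} with $r=1$ to the sequence $W^{(p)}$ over $\Z_p$ shows that $W^{(p)}$ is a $U(p)$-weighted zero-sum sequence. In either case the component at $p$ is a weighted zero-sum, and combining this with the previous paragraph, Observation \ref{obs} yields that $W$ is a $U(n)$-weighted zero-sum sequence.

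There is no real obstacle once the problem is split via Observation \ref{obs}, since the two hypotheses are tailored exactly to the two local conditions; the only point worth flagging is that both hypotheses are sharp. If $W^{(2)}$ had an odd number of ones, the mod-$2$ sum would equal $1$ and could not be corrected, as $U(2)$ offers no nontrivial weight; and if $W$ had exactly one term coprime to $p$, then $W^{(p)}$ would have a single nonzero term, which no unit weight can send to $0$ in the field $\Z_p$. Thus Lemma \ref{gri}, together with the trivial all-zero case, is precisely what is needed to cover the admissible range of the second hypothesis.
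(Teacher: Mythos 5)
Your proposal is correct and follows essentially the same route as the paper: split via Observation \ref{obs} into the components modulo $2$ and modulo $p$, settle the mod-$2$ component by the parity hypothesis, and settle the mod-$p$ component by distinguishing the all-zero case from the case of at least two units, where Lemma \ref{gri} applies. Your write-up is somewhat more detailed (e.g.\ making explicit that $U(2)=\{1\}$ and that the hypotheses are sharp), but the argument is the same.
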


\begin{proof}
 By Lemma \ref{gri}, if $W^{(p)}$ has at least two units, then it is a $U(p)$-weighted zero-sum sequence. If it has no non-zero term, then again it is a $U(p)$-weighted zero-sum sequence. As $W^{(2)}$ has an even number of ones, so, it is a zero-sum sequence. Thus, by Observation \ref{obs}, $W$ is a $U(n)$-weighted zero-sum sequence. 
\end{proof}

\begin{thm}\label{2p}
Let $n=2p$, where $p$ is an odd prime. Then, $S$ is a $U(n)$-extremal sequence for the Gao constant if and only if $S$ is $U(n)$-equivalent to a sequence of length $n+1$ having the form $(\star)$, $(\ast)$ or $(\ast\ast)$. 
\end{thm}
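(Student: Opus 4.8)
The plan is to prove the two directions separately, dispatching sufficiency quickly and spending the real effort on necessity. For sufficiency, every sequence of the form $(\star)$, $(\ast)$ or $(\ast\ast)$ is already known to be $U(n)$-extremal by Lemma \ref{star}, Lemma \ref{ast} and Lemma \ref{2ast} respectively; for $(\ast\ast)$ with $n=2p$ we are in the case $k=1$, $r=0$, so the sequence is $(\,c,\,1,\,p,\ldots,p\,)$ with $c$ even, and the argument of Lemma \ref{2ast} runs verbatim (reducing mod $2$, where the $p$'s contribute an odd number of ones). Since $U(n)$-equivalence preserves extremality by the remark following the definition of equivalence, the "if" direction is complete. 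The engine for necessity will be a sharp zero-sum test obtained by combining Observation \ref{obs}, Lemma \ref{gri} and Lemma \ref{w}: a subsequence $W$ of a sequence in $\Z_{2p}$ is a $U(n)$-weighted zero-sum \emph{if and only if} $W$ has an even number of odd terms and $W$ does not have exactly one term coprime to $p$. The forward implication is Lemma \ref{w}; the converse holds because an odd number of odd terms makes $W^{(2)}$ non-zero-sum in $\Z_2$, while exactly one term coprime to $p$ makes $W^{(p)}$ a single unit and hence not a $U(p)$-weighted zero-sum, so in either case Observation \ref{obs} forbids $W$ from being a $U(n)$-weighted zero-sum.

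Next I would set up the combinatorial bookkeeping. Classify each term of $S$ by the pair (its residue mod $2$, whether it is coprime to $p$); since $\gcd(x,2p)\in\{1,2,p,2p\}$, these four classes are exactly the four orbits of $\Z_{2p}$ under multiplication by $U(2p)$, namely the units, the even numbers coprime to $p$ (the associates of $2$), the element $p$, and $0$. Writing $u,e,q,z$ for the number of terms in each class, we have $u+e+q+z=2p+1$, the number of odd terms is $O:=q+u$, and the number of terms coprime to $p$ is $C:=e+u$. With $A=U(n)$ the full unit group, the equivalence relation $c\,y_{\sigma(i)}=a_ix_i$ reduces to $y_{\sigma(i)}$ being a unit multiple of $x_i$, and two elements of $\Z_{2p}$ are unit multiples precisely when they have the same gcd with $2p$; hence two sequences are $U(n)$-equivalent exactly when they have the same profile $(z,q,e,u)$. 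Thus it suffices to show that an extremal $S$ has one of the profiles realised by $(\star)$, $(\ast)$ or $(\ast\ast)$, and this "type equals orbit" identification is what will upgrade a count-level conclusion to an actual $U(n)$-equivalence.

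For the core argument, observe that every length-$n$ subsequence of $S$ arises by deleting a single term, and by the test above whether the result is a zero-sum depends only on the resulting parity of $O$ and value of $C$, hence only on the \emph{type} of the deleted term. I would record, for each present type, the requirement that its deletion leave a non-zero-sum subsequence: deleting a zero or an associate of $2$ fixes the parity of $O$, whereas deleting $p$ or a unit flips it, and deleting an associate of $2$ or a unit lowers $C$ by one. This produces four conditions tying the parity of $O$ to $C=1$ or $C=2$. Splitting on the parity of $O$ and solving, I expect exactly four admissible profiles: an infinite family with $C=1$ and an odd number $m=q$ of copies of $p$ (this is form $(\ast)$, the unique coprime term being a unit or an associate of $2$); the profile with $z=2p-1$ together with one unit and one associate of $2$ (the standard type, realised by $(\star)$); and the profile with $q=2p-1$, one unit, one associate of $2$ and no zeros (form $(\ast\ast)$ with $c\neq0$). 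Matching each admissible profile to a canonical form then completes the proof.

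The delicate part, and the main obstacle, is the case analysis that yields this list: I must verify that the enumeration is genuinely \emph{exhaustive} (no admissible profile is overlooked when $O$ is even versus odd) and simultaneously that no spurious profile survives. Equally, the overlaps must be tracked honestly rather than double-counted: the standard-type sequences appear both as $(\star)$ and as $(\ast)$ with $m=1$, and $(\ast\ast)$ with $c=0$ coincides with $(\ast)$ at $m=2p-1$. Confirming that each surviving profile is actually produced by the claimed form, and that the three forms jointly account for precisely the admissible profiles, is where the care lies; the structural inputs (the zero-sum test and the orbit classification) are comparatively routine once stated.
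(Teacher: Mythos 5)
Your proposal is correct and follows essentially the same route as the paper: sufficiency via Lemmas \ref{star}, \ref{ast} and \ref{2ast}, and necessity by classifying terms according to their residues mod $2$ and mod $p$ (equivalently, their $U(2p)$-orbits) and using Lemma \ref{w} together with Observation \ref{obs} to rule out all profiles other than the ones realised by $(\star)$, $(\ast)$ and $(\ast\ast)$. Your reorganisation of the case analysis as an enumeration of admissible profiles $(z,q,e,u)$, with the observation that a length-$n$ subsequence is just $S$ minus one term and that zero-sum-ness depends only on the deleted term's orbit, is a tidier bookkeeping of the same argument, and the four profiles you anticipate are exactly the ones that survive.
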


\begin{proof}
From Lemmas \ref{star}, \ref{ast} and \ref{2ast}, we see that, when $n$ is even, a sequence in $\Z_n$ which is $U(n)$-equivalent to a sequence having the form $(\star)$, $(\ast)$ or $(\ast\ast)$, is a $U(n)$-extremal sequence for the Gao constant. 

\smallskip

Let $n=2p$, where $p$ is an odd prime. Suppose $S$ is a $U(n)$-extremal sequence for the Gao constant. As $E_{U(n)}(n)=n+\Omega(n)$, so, $S$ has length $n+1$. 

\smallskip

Suppose all the terms of $S^{(2)}$ are zero. If $S^{(p)}$ has at most one non-zero term, then we get the contradiction that $S$ has a subsequence of length $n$ whose all terms are zero. So, $S^{(p)}$ must have at least two non-zero terms. Let $T$ denote a subsequence of $S$ of length $n$ such that $T^{(p)}$ has at least two non-zero terms. Then, by Lemma \ref{w}, we get the contradiction that $T$ is a $U(n)$-weighted zero-sum subsequence of $S$ of length $n$. So all terms of $S^{(2)}$ cannot be zero. By a similar argument, all terms of $S^{(2)}$ cannot be one. 

\smallskip

Suppose $S^{(2)}$ has exactly one non-zero term. Let $T$ denote the subsequence of $S$ of length $n$ such that all terms of $T^{(2)}$ are zero. If $T^{(p)}$ has at least two units, then, by Lemma \ref{w}, we get the contradiction that $T$ is a $U(n)$-weighted zero-sum subsequence of $S$ of length $n$. If $T^{(p)}$ has no units, then all terms of $T^{(p)}$ are zero and so, all terms of $T$ are zero. So, we see that $T^{(p)}$ has exactly one unit. Thus, $S$ is $U(n)$-equivalent to the sequence $(a,2b,0,\ldots,0)$, where there are $n-1$ zeroes, $a$ is odd and $b$ is coprime to $p$. 

\smallskip 
By a similar argument, if $S^{(2)}$ has exactly one term which is zero and if $T$ denotes the subsequence of $S$ of length $n$ such that all the terms of $T^{(2)}$ are one, then $T^{(p)}$ must have exactly one unit. Thus, $S$ is $U(n)$-equivalent to the sequence $(1,c,p,\ldots,p)$, where $p$ occurs  $n-1$ times and $c$ is even. 

\smallskip

Suppose $S^{(2)}$ has at least two terms which are zero and at least two terms which are 1. Suppose $S^{(p)}$ does not have exactly one unit. Then, by using Lemma \ref{w}, we will get a $U(n)$-weighted zero-sum subsequence of $S$ of length $n$. So, $S^{(p)}$ has exactly one unit. Let $T$ be the subsequence of $S$ of length $n$, such that $T^{(p)}$ is the zero sequence. 

\smallskip 

If $T^{(2)}$ has an even number of ones, then, $T^{(2)}$ will be a zero-sum sequence and so, we get the contradiction that $T$ is a zero-sum subsequence of $S$ of length $n$. Thus, $T^{(2)}$ has an odd number of ones and hence $S$ is $U(n)$-equivalent to the sequence $(b,p,\ldots,p,0,\ldots,0)$, where, $b$ is coprime to $p$ and $p$ occurs an odd number of times. 

\smallskip

Hence, when $n=2p$, where $p$ is an odd prime, a sequence in $\Z_n$ is a $U(n)$-extremal sequence for the Gao constant if and only if it is $U(n)$-equivalent to a sequence of length $n+1$  having one of the following forms:

\begin{itemize}
  \item $(a,\,2b,\,0,\,\ldots,\,0)$, where $a$ is odd and $b$ is coprime to $p$. \\
  This has the form $(\star)$. 
  
 \item $(b,\,\overbrace{p,\,\ldots,\,p}^\text{$m$ times},\,0,\,\ldots,\,0)$, where $b$ is coprime to $p$ and $m$ is odd. \\
 This has the form $(\ast)$. 
 
 \item $(1,\,c,\,p,\,\ldots,\,p)$, where $c$ is even. This has the form $(\ast\ast)$. 
 \qedhere
\end{itemize}
\end{proof}

\section{When $n=2^rp$, where $p$ is an odd prime}

The next result is Lemma 1 (ii) of \cite{L}. 

\begin{lem}\label{2r0}
Let $n=2^r$. If a sequence $S$ in $\Z_n$ has an even number (at least two) of units, then it is a $U(n)$-weighted zero-sum sequence.  
\end{lem}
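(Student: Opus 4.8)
The plan is to exhibit explicit unit weights making the total sum vanish, exploiting the even count of units to reserve two of them as ``adjusters''. First I would fix notation: since the non-units of $\Z_{2^r}$ are exactly the even residues, I would list the terms of $S$ as units $u_1,\ldots,u_{2m}$ (an even number, with $2m\geq 2$) together with even terms $y_1,\ldots,y_s$. Recalling that $S$ is a $U(n)$-weighted zero-sum sequence precisely when there exist units $a_1,\ldots,a_{2m}$ and $c_1,\ldots,c_s$ with $\sum_i a_i u_i + \sum_j c_j y_j = 0$, the goal is to choose these weights.

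The key elementary fact I would isolate is that every even element $e\in\Z_{2^r}$ is a sum of two units: indeed $e = 1 + (e-1)$ and $e-1$ is odd, hence a unit. Equivalently, for fixed units $u_1,u_2$, as $a_1,a_2$ range over $U(n)$ the expression $a_1u_1 + a_2u_2$ attains every even value, since each $a_iu_i$ ranges over all units.

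With this in hand, I would assign weight $1$ to each $y_j$ and to $u_3,\ldots,u_{2m}$, and form the partial sum $P = \sum_{j=1}^s y_j + \sum_{i=3}^{2m} u_i$. The decisive point is that $P$ is even: the $y_j$ are even, and $u_3,\ldots,u_{2m}$ is an \emph{even} number ($2m-2$) of odd terms, so their sum is even. Then $-P$ is even, so by the key fact I can pick units $a_1,a_2$ with $a_1u_1 + a_2u_2 = -P$; the total weighted sum is then $-P + P = 0$, as required. When $m=1$ this degenerates to $P=\sum_j y_j$, and the same argument applies verbatim.

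The only real obstacle is the parity bookkeeping in the previous step, and it is exactly there that the hypothesis of an \emph{even} number of units is essential: had there been an odd number of units, the reserved pair $a_1u_1+a_2u_2$ would still be even while $P$ would be odd, so no choice of weights could cancel the leftover. Thus the argument is short and constructive, its entire content being the observation that an even count of odd terms keeps the unabsorbed sum $P$ even, matching precisely the even values realizable by two units.
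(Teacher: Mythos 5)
Your proof is correct. Note that the paper itself gives no proof of this lemma --- it is quoted verbatim as Lemma~1(ii) of Luca's paper \cite{L} --- so there is no internal argument to compare against; your write-up supplies a self-contained, elementary proof where the paper relies on a citation. The two ingredients both check out: (i) for fixed units $u_1,u_2$ of $\Z_{2^r}$ and any even $e$, one can solve $a_1u_1+a_2u_2=e$ with $a_1,a_2\in U(2^r)$, since $a_iu_i$ ranges over all odd residues and $e=1+(e-1)$ exhibits $e$ as a sum of two odd residues; and (ii) the partial sum $P$ over the remaining terms is even, because the non-units of $\Z_{2^r}$ are exactly the even residues and the $2m-2$ leftover units form an even number of odd terms. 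Every term receives a unit weight, so the non-emptiness requirement in the paper's definition is satisfied, and the degenerate cases ($m=1$, or no even terms) cause no trouble. Your closing remark correctly identifies why the evenness hypothesis is indispensable: with an odd number of units, $P$ would be odd while $a_1u_1+a_2u_2$ is forced to be even, so cancellation is impossible --- consistent with the fact that, e.g., a single unit is never a $U(2^r)$-weighted zero-sum sequence for $r\geq 1$.
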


\begin{lem}\label{12}
Let $n=2^rp$, where $p$ is an odd prime. Let $S$ be a sequence in $\Z_n$ of length at least $n+2$ such that $S$ does not have any $U(n)$-weighted zero-sum subsequence of length $n$. Then, $S$ has at most two odd terms. 

\smallskip

Also, if $S$ has exactly two odd terms, then exactly one of the odd terms is a unit and all the other terms of $S$ are divisible by $p$. 
\end{lem}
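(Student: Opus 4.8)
The plan is to work prime by prime via Observation \ref{obs}: a length-$n$ subsequence $T$ of $S$ is a $U(n)$-weighted zero-sum sequence if and only if both $T^{(2)}$ (in $\Z_{2^r}$) and $T^{(p)}$ (in $\Z_p$) are weighted zero-sum. A term of $S$ being \emph{odd} means precisely that its image in $\Z_{2^r}$ is a unit, and being \emph{coprime to $p$} means its image in $\Z_p$ is a unit. By Lemma \ref{2r0}, $T^{(2)}$ is a zero-sum sequence whenever $T$ has an even number (at least two) of odd terms; by Lemma \ref{gri} together with the trivial cases (no unit, or a single unit mod $p$), $T^{(p)}$ is a zero-sum sequence unless $T$ has exactly one term coprime to $p$. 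Combining these, I record the working criterion $(\dagger)$: if a length-$n$ subsequence $T$ has an even number (at least two) of odd terms and does \emph{not} have exactly one term coprime to $p$, then $T$ is a $U(n)$-weighted zero-sum subsequence of $S$. The whole argument reduces to producing, under the stated hypotheses, a subsequence $T$ of length $n$ satisfying $(\dagger)$, which contradicts the assumption on $S$.

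For the first assertion I argue by contradiction, assuming $S$ has at least three odd terms, and build a $T$ satisfying $(\dagger)$. Let $\gamma$ be the number of terms of $S$ coprime to $p$. If $\gamma\le 1$, I aim for coprime count $0$: discard the (at most one) term coprime to $p$ and choose $T$ from the remaining terms, all divisible by $p$. Since there are at least $n+1$ of these and at least two of them are odd (because at most one odd term can be a unit), a parity count lets me select $n$ of them with an even number, at least two, of odd terms, giving coprime count $0$. If $\gamma\ge 2$, I instead force the coprime count to be at least two by including two terms coprime to $p$ (two even terms coprime to $p$, or two unit terms, or one of each, adjusting the parity of the odd count with an odd term divisible by $p$ when needed), and then fill up to length $n$. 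In every case $(\dagger)$ is met, contradicting the hypothesis on $S$; hence $S$ has at most two odd terms.

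For the second assertion, suppose $S$ has exactly two odd terms and let $a\in\{0,1,2\}$ be the number of them that are units. Any length-$n$ subsequence $T$ containing both odd terms automatically has an even number (exactly two) of odd terms, and since there are at least $n$ even terms such a $T$ exists; by $(\dagger)$ it is zero-sum unless it has exactly one term coprime to $p$. The coprime count of such a $T$ is $a+c$, where $c$ is the number of even terms coprime to $p$ that I include. If $a=2$ then $a+c\ge 2$ for every choice, and if $a=0$ then I can force $c\in\{0,2\}$ (there being enough even terms divisible by $p$ since $|S|\ge n+2$, or else at least two even terms coprime to $p$); either way $(\dagger)$ applies and we reach a contradiction. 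Thus $a=1$, i.e.\ exactly one odd term is a unit. Finally, if some even term were coprime to $p$, including it would give $a+c\ge 2$ and again a contradiction; hence every even term, as well as the second (non-unit) odd term, is divisible by $p$, which is the claimed conclusion.

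The main obstacle is the bookkeeping in the first assertion: the two conditions in $(\dagger)$ — an even number, at least two, of odd terms, and a coprime-to-$p$ count different from one — must be met simultaneously while keeping the total length exactly $n$, and when the odd terms are numerous the supply of even terms may be too small to realize the naive choice. I handle this by treating the number of odd terms included as a free even parameter $2j$ and checking that its feasible range (dictated by the available counts of each of the four types $\text{odd}/\text{even}\times\text{coprime-to-}p/\text{divisible-by-}p$) always contains a value compatible with the target coprime count; the inequality $|S|\ge n+2$ is exactly what provides the slack needed. The second assertion, by contrast, is clean once both odd terms are forced into $T$.
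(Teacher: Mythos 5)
Your proposal is correct and follows essentially the same route as the paper: you combine Lemma \ref{2r0} and Lemma \ref{gri} through Observation \ref{obs} into the criterion $(\dagger)$, split on whether the number of terms coprime to $p$ is at most one or at least two, and extract a length-$n$ subsequence with an even number (at least two) of odd terms and coprime-to-$p$ count equal to $0$ or at least $2$; the second assertion is likewise handled as in the paper. Your explicit bookkeeping with the parameter $2j$ is, if anything, slightly more careful than the paper's ``by removing a suitable $x_i$'' phrasing.
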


\begin{proof}
Let $S:(x_1,\ldots,x_k)$ be a sequence in $\Z_n$ of length at least $n+2$ such that $S$ does not have any $U(n)$-weighted zero-sum subsequence of length $n$. Suppose $S$ has at least three odd terms. We consider the following two possibilities. 

\smallskip 

{\it Case: Suppose $S$ has at least two terms which are coprime to $p$.}

\smallskip 

As the length of $S$ is at least $n+2$, so, (by removing a suitable $x_i$ if there are an odd number of odd terms) we can get a subsequence $T$ of $S$ of length $n$, which has an even number of odd terms and such that $T$ has at least two terms which are coprime to $p$. By Lemma \ref{2r0}, $T^{(2)}$ is a $U(2^r)$-weighted zero-sum sequence and by Lemma \ref{gri}, $T^{(p)}$ is a $U(p)$-weighted zero-sum sequence. So, by Observation \ref{obs}, we get the contradiction that $T$ is a $U(n)$-weighted zero-sum subsequence of $S$ having length $n$.

\smallskip

{\it Case: Suppose $S$ has at most one term which is coprime to $p$.}

\smallskip 

As the length of $S$ is at least $n+2$, so, (by removing at most two $x_i$s if needed) we can get a subsequence $T$ of $S$ of length $n$, which has an even number of odd terms and such that $T$ has no term which is coprime to $p$. By Lemma \ref{2r0}, $T^{(2)}$ is a $U(2^r)$-weighted zero-sum sequence. Also $T^{(p)}$ is a $U(p)$-weighted zero-sum sequence, as all its terms are zero. So, by Observation \ref{obs}, we get the contradiction that $T$ is a $U(n)$-weighted zero-sum subsequence of $S$ having length $n$. Thus, $S$ has at most two odd terms. 

\smallskip

Suppose $S$ has exactly two odd terms. By using a similar argument as in the second paragraph, we see that $S^{(p)}$ cannot have at least two units. By using a similar argument as in the third paragraph, we see that $S^{(p)}$ must have at least one unit. Thus, $S^{(p)}$ has exactly one unit and so, by a similar argument as in the third paragraph, both the odd terms cannot be divisible by $p$. Hence, exactly one of the two odd terms must be coprime to $p$, and so, it will be a unit. 
\end{proof}

\begin{thm}\label{exto}
Let $n=2^rp$, where $p$ is an odd prime and $r\geq 2$. Let $S$ be a sequence in $\Z_n$. Suppose $S$ is a $U(n)$-extremal sequence for the Gao constant. Then, one of the following two cases will occur: 

\smallskip

An odd multiple of $2^i$ occurs exactly once in $S$  for each $i$, such that $0\leq i\leq r-2$. 

\smallskip

There is a unique $j$ such that $0\leq j\leq r-2$ and an odd multiple of $2^j$ occurs exactly two times in $S$. Also, for any $i\neq j$ such that $0\leq i\leq r-2$, an odd multiple of $2^i$ occurs exactly once in $S$. 
\end{thm}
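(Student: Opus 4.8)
The plan is to translate the problem into a statement about the $2$-adic valuations of the terms. Since $\Omega(n)=r+1$, an extremal $S$ has length $E_{U(n)}(n)-1=n+r$. Writing $c_i$ for the number of terms of $S$ whose $2$-adic valuation is exactly $i$ (equivalently, the odd multiples of $2^i$) for $0\le i\le r-1$, and $c_{\ge r}$ for the number divisible by $2^r$, one checks that the two cases of the statement are together equivalent to the two inequalities (a) $c_i\ge 1$ for every $0\le i\le r-2$, and (b) $\sum_{i=0}^{r-2}c_i\le r$: indeed, if each $c_i\ge1$ then $\sum_{i=0}^{r-2}c_i\in\{r-1,r\}$ forces either all $c_i=1$ or exactly one $c_j=2$. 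Two preliminary facts come for free: Lemma~\ref{12} gives $c_0\le 2$, and the descent of Lemma~\ref{lifts} gives $c_0\ge 1$. For the latter, if every term were even I would divide by $2$, pass to $\Z_{n/2}$, and use $E_{U(n/2)}(n/2)=n/2+r$ to carve out two disjoint $U(n/2)$-weighted zero-sums of length $n/2$ from the $n+r=2(n/2)+r$ terms; their union has length $n$ and lifts, by Lemma~\ref{lifts}, to a forbidden subsequence. The same computation rules out every term being divisible by a fixed $2^t$ with $t\ge1$.

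The engine of the argument is a combinability criterion in $\Z_{2^r}$: a sequence $W$ is a $U(2^r)$-weighted zero-sum if and only if, letting $w$ be the least valuation occurring among the terms of $W$ not divisible by $2^r$, either no such term exists or the number of terms of valuation $w$ is even. The ``if'' direction generalizes Lemma~\ref{2r0}: an even number of valuation-$w$ terms, weighted by suitable units, realizes every multiple of $2^{w+1}$, while every other term is already such a multiple, so the weighted sum can be driven to $0$; the ``only if'' direction is the observation that a weighted sum whose minimal-valuation layer has odd size has valuation exactly $w<r$, hence is nonzero. Combined with Observation~\ref{obs} and Lemma~\ref{gri}, this shows that a length-$n$ subsequence $T$ is a $U(n)$-weighted zero-sum exactly when (i) the minimal valuation occurring in $T^{(2)}$ (below $r$) has even multiplicity, and (ii) the number of terms of $T$ coprime to $p$ is $0$ or at least $2$. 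So $S$ being extremal means precisely that no length-$n$ subsequence simultaneously satisfies (i) and (ii).

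I would then split on $c_0$. If $c_0=2$, Lemma~\ref{12} tells me that $S$ has a single term coprime to $p$, namely one of the two odd terms $x^{\ast}$, every other term being divisible by $p$. Then (ii) holds for $T$ exactly when $x^{\ast}\notin T$, so the forbidden configurations are precisely the length-$n$ subsequences of $S\setminus\{x^{\ast}\}$ satisfying (i); extremality becomes the purely $2$-adic statement that $S\setminus\{x^{\ast}\}$ has no length-$n$ subsequence whose minimal valuation has even multiplicity. Using the criterion and the deletion budget $r-1$, picking the least valuation $v$ with multiplicity $\ge2$ and deleting everything below it would produce such a subsequence unless no valuation $\le r-1$ has multiplicity $\ge2$ in $S\setminus\{x^{\ast}\}$; together with the analogue of the all-free construction this pins the profile down to $c_0=2,\ c_1=\dots=c_{r-1}=1$, which is exactly Case~2 with $j=0$.

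The remaining case $c_0=1$ is the crux, because here many even terms may be coprime to $p$ and Lemma~\ref{12} gives no structure. For (b), if $\sum_{i=0}^{r-2}c_i\ge r+1$ then by pigeonhole the least valuation $v^{\ast}$ with $c_{v^{\ast}}\ge2$ satisfies $v^{\ast}\le r-2$ and $\sum_{i<v^{\ast}}c_i\le v^{\ast}\le r-2$; deleting all lower terms and keeping an even number (at least two) of terms of valuation $v^{\ast}$ makes (i) hold, and since at least $n+2$ terms of valuation $\ge v^{\ast}$ remain I can steer the kept $p$-coprime terms to be avoided entirely or included in a pair, securing (ii)---a contradiction. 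For (a), a missing valuation $i_0$ with $1\le i_0\le r-2$ must likewise be shown to permit a subsequence meeting (i) and (ii): after deleting the (at most $r$, by (b)) terms of valuation $<i_0$, the remaining $\ge n$ terms are all divisible by $2^{i_0+1}$, and one argues as in the descent that $n$ of them can be chosen combinable with $p$-coprime count $\ne1$. The main obstacle throughout the $c_0=1$ case is exactly this simultaneous control: arranging the even multiplicity at the minimal valuation for (i) while keeping the number of $p$-coprime terms away from $1$ for (ii), all within the length-$n$ constraint and the budget of $r$ deletions. The delicate situations are those in which nearly all usable terms are coprime to $p$, where one must exploit the freedom in choosing which valuation serves as the minimum and which (paired) terms witness it.
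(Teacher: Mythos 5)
Your reformulation via the $2$-adic valuation profile $(c_i)$, the combinability criterion in $\Z_{2^r}$ (minimal valuation must have even multiplicity), and the resulting characterization ``$T$ is a $U(n)$-weighted zero-sum iff (i) and (ii)'' are all correct and are a clean way to make explicit what the paper uses implicitly. The bounds $1\le c_0\le 2$ and the treatment of the $c_0=2$ case are essentially sound (though your claim that the profile is pinned to $c_{r-1}=1$ is false --- $c_{r-1}$ can be any odd number, as the form $(\ast)$ shows; your deletion budget of $r-1$ does not cover the layer $v=r-1$, but the theorem makes no assertion there, so this is only an overclaim, not a fatal error).

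The genuine gap is in the $c_0=1$ case, and it is not merely the ``delicate situation'' you flag at the end --- your argument for (b) is structurally wrong. You derive a contradiction using only the hypotheses $c_{v^{\ast}}\ge 2$, $v^{\ast}\le r-2$, $\sum_{i<v^{\ast}}c_i\le r-2$, and the availability of $n+2$ terms of valuation $\ge v^{\ast}$; the assumption $\sum_{i=0}^{r-2}c_i\ge r+1$ is never actually used in the deletion-and-steering step. But those hypotheses are satisfied by genuine extremal sequences of Case 2 with $j=v^{\ast}\ge 1$ (e.g.\ $(a_0,2a_1,\ldots,2^{j-1}a_{j-1},\,2^j,\,2^jp\,a_j,\ldots,2^{r-2}p\,a_{r-2},\,n/2,\ldots,n/2,\,0,\ldots,0)$ with an odd number of copies of $n/2$), so if your steering worked it would prove these sequences are not extremal --- i.e.\ it would prove too much. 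The steering fails precisely when the unique $p$-coprime term among the valuation-$\ge v^{\ast}$ terms itself has valuation $v^{\ast}$ and $c_{v^{\ast}}=2$: excluding it breaks (i), and there is no second coprime term to pair it with for (ii). What the paper does instead, and what your proposal is missing, is to apply (the scaled analogue of) the second part of Lemma~\ref{12} to the subsequence $U$ of all terms divisible by $2^{j}$: since $U$ has length $\ge n+2$ and exactly two terms of minimal valuation $j$, exactly one of those two is coprime to $p$ and \emph{every other term of $U$ is divisible by $p$}. This structural fact makes condition (ii) automatic for any subsequence drawn from the higher-valuation layers, after which a second repeated valuation or a missing valuation is easily contradicted via Lemma~\ref{2r0}. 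Without this input, the simultaneous control of (i) and (ii) that you identify as ``the main obstacle'' is not just delicate but, in the configuration above, impossible --- so the proof as proposed cannot be completed along the stated lines.
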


\begin{proof}
Let $S$ be a $U(n)$-extremal sequence for the Gao constant. As $E_{U(n)}(n)=n+\Omega(n)$, so, $S$ has length $k=n+r$. Suppose there exists $i$ such that $0\leq i\leq r-2$ and an odd multiple of $2^i$ occurs at least two times in $S$. 

\smallskip

Let $j$ be the smallest value of $i$, such that $0\leq i\leq r-2$ and an odd multiple of $2^i$ occurs at least two times in $S$. Suppose $S$ has at least three terms which are an odd multiple of $2^j$. As $S$ has at most one term which is an odd multiple of $2^i$, where $0\leq i\leq j-1$, so at most $j$ terms of $S$ will not be divisible by $2^j$. As $j\leq r-2$, so at least $k-(r-2)\geq n+r-(r-2)=n+2$ terms of $S$ are divisible by $2^j$.

\smallskip 

Let $U$ be the subsequence of $S$ consisting of all the terms of $S$ which are divisible by $2^j$. If $U$ has a $U(n)$-weighted zero-sum subsequence of length $n$, then $S$ will also have such a subsequence and this is not possible. As $U$ has length at least $n+2$, so, by Lemma \ref{12}, at most two terms of $U$ are an odd multiple of $2^j$. This contradicts the assumption that  $S$ has at least three terms which are an odd multiple of $2^j$. Thus, exactly two terms of $S$ are an odd multiple of $2^j$.  

\smallskip

By Lemma \ref{12}, exactly one of the two terms of $U$ which are an odd multiple of $2^j$, is coprime to $p$ and all the other terms of $U$ are divisible by $p$. Suppose $j\leq r-3$ and at least two terms of $S$ are odd multiples of $2^i$ for some $i$, where $j+1\leq i\leq r-2$. Let us assume that $i'$ is the smallest such value of $i$. Let $T$ denote the subsequence of $S$ consisting of all the terms which are divisible by $2^{i'}$. Then, $T$ has length at least $k-(i'+1)$. As $i'\leq r-2$, so $k-(i'+1)\geq n+r-(r-1)=n+1$.

\smallskip 

By using Lemma \ref{2r0}, we can find a $U(2^r)$-weighted zero-sum subsequence of length $n$ of $T^{(2)}$. Also, $T^{(p)}$ is the zero sequence, as all the terms of $U$ which are even multiples of $2^j$, are divisible by $p$, and so, all the terms of $T$ are divisible by $p$. Thus, by Observation \ref{obs}, we get the contradiction that $T$ (and hence $S$) has a $U(n)$-weighted zero-sum subsequence of length $n$.  Hence, if $j\leq r-3$, at most one term of $S$ is an odd multiple of $2^i$, for any $j+1\leq i\leq r-2$. 

\smallskip

Suppose for some $i$, such that $0\leq i\leq r-2$ and $i\neq j$, there is no term of $S$ which is an odd multiple of $2^i$. As any non-zero term of $\Z_{2^r}$ is a unit multiple of a power of 2 which is between $0$ and $r-1$, so, $S^{(2)}$ will have at least $k-(r-1)=n+1$ terms which are either zero or a unit multiple of $2^{r-1}$. So, we can find a subsequence $V$ of $S$ having length $n$, such that $V^{(2)}$ has an even number of terms which are a unit multiple of $2^{r-1}$. 

\smallskip 

So, $V^{(2)}$ is a zero-sum sequence in $\Z_{2^r}$. As all the even terms of $U$ are divisible by $p$ and as $V$ is a subsequence of $U$, so, $V^{(p)}$ is the  zero sequence. Thus, by Observation \ref{obs}, we get the contradiction that $V$ is a $U(n)$-weighted zero-sum subsequence of $S$ having length $n$. Hence, if $0\leq i\leq r-2$ and $i\neq j$, then there is at least one term (and hence  exactly one term) of $S$ which is an odd multiple of $2^i$. 
\end{proof}

\begin{cor}
Let $n=2^rp$, where $p$ is an odd prime and $r\geq 2$. Let $S$ be a sequence in $\Z_n$. Suppose $S$ is a $U(n)$-extremal sequence for the Gao constant. Then, $S$ has at least one and at most two odd terms.
\end{cor}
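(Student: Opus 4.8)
The plan is to read the conclusion off directly from Theorem~\ref{exto}, since an odd term of $S$ is precisely a term that is an odd multiple of $2^0=1$. Thus the number of odd terms of $S$ equals the number of terms of $S$ which are an odd multiple of $2^0$, and it suffices to track the single index $i=0$ in the dichotomy supplied by that theorem. Note that $i=0$ is a legitimate index, since it satisfies $0\leq i\leq r-2$ because $r\geq 2$.

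First I would invoke Theorem~\ref{exto}, which guarantees that exactly one of its two cases occurs. In the first case, an odd multiple of $2^i$ occurs exactly once in $S$ for every $i$ with $0\leq i\leq r-2$; specializing to $i=0$ shows that $S$ has exactly one odd term, so the conclusion holds with a single odd term.

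In the second case there is a unique $j$ with $0\leq j\leq r-2$ for which an odd multiple of $2^j$ occurs exactly twice, while for every $i\neq j$ in that range an odd multiple of $2^i$ occurs exactly once. Here I would split on whether $j=0$. If $j=0$, then an odd multiple of $2^0$ occurs exactly twice, so $S$ has exactly two odd terms. If $j\neq 0$, then $i=0$ is distinct from $j$ and still lies in the allowed range, so an odd multiple of $2^0$ occurs exactly once and $S$ has exactly one odd term.

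Combining the cases, $S$ has either one or two odd terms, which is the claimed statement. I do not expect any genuine obstacle: the entire content is already carried by Theorem~\ref{exto}, and the only things to verify are the bookkeeping identification of \emph{odd term} with \emph{odd multiple of $2^0$} and the fact that $i=0$ is an admissible index whenever $r\geq 2$.
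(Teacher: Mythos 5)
Your proof is correct, and it even yields the slightly sharper conclusion that $S$ has \emph{exactly} one or \emph{exactly} two odd terms. Your route differs mildly from the paper's: the paper obtains the upper bound (at most two odd terms) directly from Lemma~\ref{12}, using only that $S$ has length $n+r\geq n+2$ and admits no $U(n)$-weighted zero-sum subsequence of length $n$, and then invokes Theorem~\ref{exto} only for the lower bound; you instead extract both bounds from the dichotomy of Theorem~\ref{exto} by specializing to the index $i=0$ (legitimate since $r\geq 2$) and splitting case two according to whether $j=0$. Your argument is more self-contained relative to Theorem~\ref{exto} and gives the exact count, while the paper's is marginally more economical in that the upper bound does not require the full strength of that theorem. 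Both are valid; the identification of ``odd term'' with ``odd multiple of $2^0$'' that you flag is indeed the only bookkeeping point, and you handle it correctly.
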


\begin{proof}
Let $S$ be a $U(n)$-extremal sequence for the Gao constant. As $E_{U(n)}(n)=n+\Omega(n)$, so, $S$ has length $k=n+r$. Then, $k\geq n+2$ as $r\geq 2$. So, by Lemma \ref{12}, $S$ has at most two odd terms. By Theorem \ref{exto}, $S$ has at least one odd term.
\end{proof}

\begin{thm}
Let $n=2^rp$, where $p$ is an odd prime and $r\geq 2$. Let $S$ be a sequence in $\Z_n$ which has exactly two odd terms. Then, $S$ is a $U(n)$-extremal sequence for the Gao constant if and only if $S$ is $U(n)$-equivalent to a sequence having the form $(\ast)$. 
\end{thm}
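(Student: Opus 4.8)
The backward direction is immediate: by Lemma \ref{ast} every sequence of the form $(\ast)$ is a $U(n)$-extremal sequence for the Gao constant, and by the Remark following Theorem \ref{odde} this property is preserved under $U(n)$-equivalence.

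For the forward direction, suppose $S$ is extremal with exactly two odd terms; since $E_{U(n)}(n)=n+\Omega(n)$, $S$ has length $n+r\geq n+2$. The plan is first to pin down the structure of $S$ and then to show that a single parity is the only obstruction to a weighted zero-sum of length $n$. By Lemma \ref{12}, exactly one of the two odd terms, say $o_1$, is a unit, while the second odd term $o_2$ and every even term of $S$ are divisible by $p$. An odd number (an odd multiple of $2^0$) occurs exactly twice, so Theorem \ref{exto} places us in its second case with $j=0$: for each $i$ with $1\leq i\leq r-2$ there is exactly one term $w_i$ with $v_2(w_i)=i$. These $w_i$ together with $o_1,o_2$ account for $r$ terms, leaving exactly $n$ terms of $2$-adic valuation at least $r-1$. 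As these are all divisible by $p$ and $v_p(n)=1$, each equals $n/2$ (when $v_2=r-1$) or $0$ (when $v_2\geq r$); say $m$ of them equal $n/2$. Since units in $U(n)$ preserve $2$-adic valuations and divisibility by $p$, $S$ is $U(n)$-equivalent to
$$(\,o_1,\,o_2,\,2pu_1,\,\ldots,\,2^{r-2}pu_{r-2},\,\overbrace{n/2,\,\ldots,\,n/2}^{\text{$m$ times}},\,\overbrace{0,\,\ldots,\,0}^{\text{$(n-m)$ times}}\,).$$

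The heart of the proof is to show that $S$ is extremal exactly when $m$ is odd. By Observation \ref{obs}, a length-$n$ subsequence $T$ is a $U(n)$-weighted zero-sum iff $T^{(p)}$ is a $U(p)$-weighted zero-sum and $T^{(2)}$ is a $U(2^r)$-weighted zero-sum. Since $o_1$ is the only term coprime to $p$, Lemma \ref{gri} shows $T^{(p)}$ is a zero-sum iff $o_1\notin T$. For the $2$-part I will establish the criterion that a not-identically-zero sequence in $\Z_{2^r}$ is a $U(2^r)$-weighted zero-sum iff the number of terms attaining the minimum $2$-adic valuation is even; the reverse implication is Lemma \ref{2r0} applied after dividing out that minimum valuation, while the forward implication follows because, once the common factor $2^\ell$ is removed (with $\ell$ the minimum valuation), the resulting weighted sum is odd as soon as $2^\ell$ is attained an odd number of times. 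Applying this to $T^{(2)}$ forces $T$ to avoid $o_2$ and every $w_i$ as well, since each would otherwise be the unique term realizing the minimum valuation, giving an odd count. Discarding $o_1,o_2,w_1,\ldots,w_{r-2}$ removes exactly $r$ terms, so the only possible $T$ consists of the $m$ copies of $n/2$ and the $n-m$ zeros; this is a weighted zero-sum precisely when $m$ is even. Hence $S$ is extremal iff $m$ is odd, and in that case, choosing the prime ordering $p_1=p$, $p_2=\cdots=p_r=2$ and setting $b_1=o_1$, $pb_2=o_2$, $2^{i-2}pb_i=w_{i-2}$ exhibits the displayed sequence as one of the form $(\ast)$.

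The step I expect to be the main obstacle is the minimum-valuation parity criterion for $U(2^r)$-weighted zero-sums, together with the uniqueness of the candidate $T$. One must verify both that including any of $o_2,w_1,\ldots,w_{r-2}$ strictly lowers the minimum valuation of $T^{(2)}$ to a value attained by a single term, and that successive reduction modulo powers of $2$ genuinely detects non-vanishing; packaging this through the parity of the weighted sum at the relevant level, with Lemma \ref{2r0} supplying the positive direction, is the crux.
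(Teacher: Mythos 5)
Your proof is correct and follows essentially the same route as the paper's: Lemma \ref{12} and Theorem \ref{exto} (second case, $j=0$) pin down the structure, the remaining $n$ terms are forced to be $0$ or $n/2$ by divisibility, and the parity of $m$ comes from the fact that this length-$n$ subsequence cannot be a weighted zero-sum. Your second half --- the minimum-valuation parity criterion for $U(2^r)$-weighted zero-sums and the uniqueness of the candidate subsequence $T$ --- is sound but redundant, since the ``$m$ odd implies extremal'' direction is already supplied by Lemma \ref{ast}.
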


\begin{proof}
From Lemma \ref{ast}, we see that if a sequence $S$ is $U(n)$-equivalent to a sequence having the form $(\ast)$, then $S$ is a $U(n)$-extremal sequence for the Gao constant. 

\smallskip

Let $S$ be a sequence in $\Z_n$ which is a $U(n)$-extremal sequence for the Gao constant, where $n=2^rp$, where $p$ is an odd prime and $r\geq 2$. Then, $S$ has length $k=n+r$. If $S$ has exactly two odd terms, then, by Lemma \ref{12}, exactly one of the odd terms, is a unit and all the other terms of $S$ are divisible by $p$. By Theorem \ref{exto}, for each $i$ such that $1\leq i\leq r-2$, an odd multiple of $2^i$ occurs exactly once in $S$. 

\smallskip 

Thus, there are $k-r=n$ terms of $S$ which are either zero or an odd multiple of $2^{r-1}$. Let $T$ denote the subsequence of $S$ consisting of these $n$ terms. As the terms of $T$ are divisible by $p$, so, the  non-zero terms of $T$ must be equal to $n/2$. As $-1$ is a unit and as $T$ cannot be a $U(n)$-weighted zero-sum sequence, so, the number of terms of $T$ which are equal to $n/2$ must be odd. 

\smallskip 

Hence, $S$ is $U(n)$-equivalent to the sequence 
$$(\,1,\,p\,a_0,\,2p\,a_1,\,2^2p\,a_2,\,\ldots,\,2^{r-2}p\,a_{r-2},\,\overbrace{n/2,\,\ldots,\,n/2}^\text{$m$ times},\,0,\,\ldots,\,0\,)$$
where $m$ is odd and for $0\leq i\leq r-2$, $a_i$ is odd. This has the form $(\ast)$.   
\end{proof}

\begin{thm}
Let $n=2^rp$, where $p$ is an odd prime and $r\geq 2$. Let $S$ be a sequence in $\Z_n$ which has exactly one odd term. Then, $S$ is a $U(n)$-extremal sequence for the Gao constant if and only if $S$ is $U(n)$-equivalent to a sequence having the form $(\star)$, $(\ast)$ or $(\ast\ast)$. 
\end{thm}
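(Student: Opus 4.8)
The plan is as follows. The ``if'' direction is immediate: by Lemmas~\ref{star}, \ref{ast} and \ref{2ast}, any sequence $U(n)$-equivalent to one of the forms $(\star)$, $(\ast)$, $(\ast\ast)$ is a $U(n)$-extremal sequence for the Gao constant. For the ``only if'' direction, let $S$ be extremal with exactly one odd term; since $E_{U(n)}(n)=n+\Omega(n)$, $S$ has length $n+r$. First I would record a \emph{parity reduction}: if a length-$n$ subsequence $T$ contained the unique odd term, then $T^{(2)}$ would have exactly one unit of $\Z_{2^r}$, so every $U(2^r)$-weighted sum of $T^{(2)}$ is odd, hence nonzero; by Observation~\ref{obs}, such a $T$ cannot be $U(n)$-weighted zero-sum. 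Thus every $U(n)$-weighted zero-sum subsequence of length $n$ must lie among the $n+r-1$ \emph{even} terms, and it suffices to forbid one there. I would also record that, since $U(n)$-equivalence is exactly equality of the multiset of unit-orbits, it is equivalence of the multiset of \emph{types} $(\min(v_2(x),r),\,[p\mid x])$; matching this multiset to a canonical sequence is the goal.

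Next I would set up the two zero-sum criteria for an all-even subsequence $T$. For the $2$-part, factoring out the minimal $2$-adic valuation and applying Lemma~\ref{2r0} to the resulting sequence over a smaller $\Z_{2^s}$ shows that $T^{(2)}$ is $U(2^r)$-weighted zero-sum precisely when either all its terms are $\equiv0\pmod{2^r}$ or the minimal valuation present occurs an \emph{even} number of times (oddness of that count is a parity obstruction). For the $p$-part, Lemma~\ref{gri} together with the all-zero case shows that $T^{(p)}$ is $U(p)$-weighted zero-sum precisely when the number of terms of $T$ coprime to $p$ is \emph{not} exactly one. By Theorem~\ref{exto} the low-valuation profile of $S$ is fixed: an odd multiple of $2^i$ occurs exactly once for each $1\le i\le r-2$, except that in the Case~B alternative a single level $j$, with $1\le j\le r-2$ as there is but one odd term, is doubled.

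The core is a \emph{deletion argument} on the even terms, split into a low part ($1\le v_2\le r-2$) and a high part ($v_2\ge r-1$), the latter of size about $n$. Using the $r-1$ permitted deletions—removing the low levels from the bottom up, and in Case~B retaining the doubled level $j$ as an even-multiplicity bottom—I would exhibit, in every configuration, a length-$n$ even subsequence whose minimal valuation has even multiplicity, so the $2$-criterion holds. Extremality then forces the $p$-criterion to fail on all such subsequences, i.e.\ each must contain \emph{exactly one} term coprime to $p$. Pushing this through the available deletions pins the coprimality pattern: there is a single $p$-coprime term in the high part, every term of valuation $\ge r$ other than it is a zero, and every term of valuation $r-1$ that is not that $p$-unit equals $n/2$ (the unique odd multiple of $2^{r-1}$ divisible by $p$). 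Finally, exactly as in the two-odd-term theorem, applying the $2$-criterion to the block of $n/2$'s and zeros forces the number of $n/2$'s to be \emph{odd}. Reading off the forced type-multiset then gives $(\star)$ when the high part is one $p$-unit together with $n-1$ zeros, $(\ast)$ when it is an odd number of $n/2$'s, some zeros and one $p$-unit of valuation $\ge r$, and $(\ast\ast)$ when it is $n-1$ copies of $n/2$ together with a single term divisible by $2^r$.

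I expect the main obstacle to be the Case~B bookkeeping combined with the placement of the single $p$-coprime term. One must verify that \emph{no} choice of deletions simultaneously realises an even minimal-valuation multiplicity and a $p$-coprime count different from one, across all sub-configurations of where that unit sits (low versus high, and, within the high part, at valuation $r-1$ versus $\ge r$) and of which low level is doubled. Closing each sub-case and checking that the forced type-multiset is exactly that of a canonical $(\star)$, $(\ast)$ or $(\ast\ast)$ sequence—so that $S$ is $U(n)$-equivalent to one of them—is the crux.
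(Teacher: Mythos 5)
Your proposal follows essentially the same route as the paper's proof: both rest on Theorem~\ref{exto} to fix the multiplicities of the odd multiples of $2^i$ for $0\le i\le r-2$, reduce the zero-sum testing to the two componentwise criteria (Lemma~\ref{2r0} for the $2$-part, Lemma~\ref{gri} for the $p$-part, combined via Observation~\ref{obs}), and then pin down the high-valuation block by deleting terms from it and finishing with a parity count of the terms equal to $n/2$; the paper organizes exactly this as the two cases of Theorem~\ref{exto}, mirroring the subcases of Theorem~\ref{2p} in the first one. One inaccuracy to repair when writing this out: your uniform ``pinned pattern'' --- a single $p$-coprime term in the high part with every term of valuation at least $r$ other than it equal to zero --- is false in the $(\ast\ast)$ configuration, where the term $c$ is an \emph{arbitrary} multiple of $2^r$ (possibly nonzero and possibly itself coprime to $p$) because once the other $n$ high terms are forced to be $2^{r-1}\cdot(\text{unit})$ and $n/2$'s there is no length-$n$ deletion left that constrains $c$; your final enumeration already lists this exception, so the pattern must be stated per subcase rather than uniformly, but as a uniform claim it would contradict Lemma~\ref{2ast}.
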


\begin{proof}
From Lemmas \ref{star}, \ref{ast} and \ref{2ast}, we see that a sequence which is $U(n)$-equivalent to a sequence having the form $(\star)$, $(\ast)$ or $(\ast\ast)$, is a $U(n)$-extremal sequence for the Gao constant. 

\smallskip

Let $S$ be a $U(n)$-extremal sequence for the Gao constant, where $n=2^rp$, such that $p$ is an odd prime and $r\geq 2$. Then, $S$ has length $k=n+r$. By Theorem \ref{exto}, two cases can occur. 

\smallskip

{\it Case:} An odd multiple of $2^i$ occurs exactly once in $S$, for each $i$, such that $0\leq i\leq r-2$. 

\smallskip

Let $S_*$ denote the subsequence of $S$ consisting of the remaining $k-(r-1)=n+1$ terms. Then, every non-zero term of $S_*$ is an odd multiple of $2^{r-1}$. The arguments for the remaining part of this case of the proof, are similar to the arguments in Theorem \ref{2p}. So, we do not repeat those arguments and just state the conclusions directly. We see that  $S_*^{(2)}$ cannot have all terms zero or all terms non-zero. We use the fact that the sum of an even number of terms, all of which are odd multiples of $2^{r-1}$, will be zero in $\Z_{2^r}$. 

\smallskip 

If $S_*^{(2)}$ has exactly one non-zero term, then $S_*$ is $U(n)$-equivalent to the sequence $(2^{r-1}a,\,2^rb,\,0,\,\ldots,\,0)$, where there are $n-1$ zeroes, $a$ is odd and $b$ is coprime to $p$. If $S_*^{(2)}$ has exactly one term which is zero, then $S_*$ is $U(n)$-equivalent to the sequence $(c,\,2^{r-1},\,2^{r-1}p\,a_2,\,\ldots,\,2^{r-1}p\,a_n)$, where $c$ is divisible by $2^r$ and $a_i$ is odd, for $2\leq i\leq n$. If $S_*^{(2)}$ has at least two terms which are zero and at least two terms which are non-zero, then $S_*$ is $U(n)$-equivalent to the sequence $(2^{r-1}b,\,2^{r-1}p\,a_1,\,\ldots,\,2^{r-1}p\,a_m,\,0,\,\ldots,\,0)$, where $b$ is coprime to $p$, $m$ is odd and $a_i$ are odd, for $1\leq i\leq m$. 

\smallskip 

Thus, in the case when an odd multiple of $2^i$ occurs exactly once in $S$, for each $i$, such that $0\leq i\leq r-2$, we see that $S$ is $U(n)$-equivalent to a sequence having one of the following forms: 

\smallskip

$(a_0,\,2a_1,\,\ldots,\,2^{r-2}a_{r-2},\,2^{r-1}a_{r-1},\,2^rb,\overbrace{\,0,\,\ldots,\,0}^\text{$n-1$ times})$, where $a_i$ is odd, for $0\leq i\leq r-1$ and $b$ is coprime to $p$. This has the form $(\star)$.

\smallskip

$(a_0,\,2a_1,\,\ldots,\,2^{r-2}a_{r-2},\,2^{r-1}b,\,\overbrace{n/2,\,\ldots,\,n/2}^\text{$m$ times},\overbrace{0,\,\ldots,\,0}^\text{$n-m$ times})$, where $a_i$ is odd, for $0\leq i\leq r-2$, $b$ is coprime to $p$ and $m$ is odd. This has the form $(\ast)$.

\smallskip

$(a_0,\,2a_1,\,\ldots,\,2^{r-2}a_{r-2},\,c,\,2^{r-1},\,\overbrace{n/2,\,\ldots,\,n/2}^\text{$n-1$ times})$, where $a_i$ is odd, for $0\leq i\leq r-2$ and $c$ is divisible by $2^r$. This has the form $(\ast\ast)$. 

\bigskip

{\it Case:} There is a unique $j$ such that $1\leq j\leq r-2$ and an odd multiple of $2^j$ occurs exactly two times in $S$. Also, for any $i\neq j$ such that $0\leq i\leq r-2$, an odd multiple of $2^i$ occurs exactly once in $S$. 

\smallskip
 
Let $S_*$ denote the subsequence of $S$ consisting of the remaining $k-r=n$ terms. We claim that all the terms of $S_*$ are divisible by $p$.  
Consider the subsequence $T$ of $S$ consisting of all the terms of $S$ which are divisible by $2^j$. Then $T$ has length $k-j=n+r-j\geq n+2$. By Lemma \ref{12}, exactly one of the two terms which are odd multiples of $2^j$, is coprime to $p$ and all the other terms of $T$ are divisible by $p$. This proves our claim. Then, every non-zero term of $S_*$ is an odd multiple of $2^{r-1}$. If the number of non-zero terms in $S_*$ is  even, then $S_*$ will be a zero-sum subsequence of $S$ of length $n$. So, the number of non-zero terms in $S_*$ must be odd. Thus, $S$ is $U(n)$-equivalent to 
$$(a_0,\,2a_1,\,\ldots,\,2^{j-1}a_{j-1},\,2^j,\,2^jp\,a_j,\,\ldots,\,2^{r-2}p\,a_{r-2},\,\overbrace{n/2,\,\ldots,\,n/2}^\text{$m$ times},\overbrace{0,\,\ldots,\,0}^\text{$n-m$ times}),$$ 
where $a_i$ is odd, for $0\leq i\leq r-2$ and $m$ is odd. This has the form $(\ast)$. 
\end{proof}

%\newpage

\section{Concluding remarks}

From Theorem \ref{odde}, we see that when $n$ is odd, there is only one type of $U(n)$-extremal sequence for the Gao constant (and hence also for the Davenport  constant). However, in this paper, we have seen that for any even $n$, there are at least three types of $U(n)$-extremal sequences for the Gao constant. Sequences of the type $(\star)$ which have a term equal to $n/2$, are also of the type $(\ast)$. 

\smallskip 

When $n$ is even and $\omega(n)\leq 2$, it can be shown that there is only one type of $U(n)$-extremal sequence for the Davenport constant. When $n$ is even, with $\omega(n)\geq 3$, there are other types of $U(n)$-extremal sequences for the Davenport constant. So, we will get other types of $U(n)$-extremal sequences for the Gao constant (which are of the standard type).

\begin{lem}
Let $n=2p_1p_2\ldots p_r$ be squarefree where the $p_i$s are primes. Then, the following sequence is a $U(n)$-extremal sequence for the Davenport constant; $S:(a,\,\hat p_1,\,\hat p_2,\,\ldots,\,\hat p_r)$, where $\hat p_i=n/(2p_i)$, for $1\leq i\leq r$ and $a$ is chosen to be either 1 or 2, so that $S$ has an odd number of odd terms. 
\end{lem}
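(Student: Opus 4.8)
The plan is to verify two things: that $S$ has the correct length $D_{U(n)}(n)-1$, and that $S$ has no non-empty $U(n)$-weighted zero-sum subsequence. For the length, I would note that $n=2p_1\cdots p_r$ squarefree gives $\Omega(n)=r+1$, so combining $E_{U(n)}(n)=n+\Omega(n)$ with $E_A(n)=D_A(n)+n-1$ yields $D_{U(n)}(n)=\Omega(n)+1=r+2$. Since $S:(a,\hat p_1,\dots,\hat p_r)$ has $r+1$ terms, it has exactly the length $D_{U(n)}(n)-1$ required of an extremal sequence for the Davenport constant. It then remains to rule out $U(n)$-weighted zero-sum subsequences.

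The main tool is Observation \ref{obs} applied to a subsequence $T$ of $S$: since $n$ is squarefree, $T$ is a $U(n)$-weighted zero-sum sequence if and only if $T^{(2)}$ is a $U(2)$-weighted zero-sum sequence in $\Z_2$ and, for every $i$, $T^{(p_i)}$ is a $U(p_i)$-weighted zero-sum sequence in $\Z_{p_i}$. I would first record how each term sits over each odd prime. Over $p_i$: the image of $a$ (be it $1$ or $2$) is a unit since $p_i$ is odd; the image of $\hat p_i=\prod_{k\ne i}p_k$ is a unit, being a product of units mod $p_i$; and for $j\ne i$ the image of $\hat p_j$ is $0$, since $p_i\mid\hat p_j$. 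Thus over $p_i$ the only possibly-nonzero terms are the images of $a$ and of $\hat p_i$, both units.

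The key step is the $p_i$-component condition for $T$. Since $\Z_{p_i}$ is a field, a lone nonzero term cannot be cancelled to $0$ by a unit weight, and adjoining zero terms cannot help; hence if $T^{(p_i)}$ is a $U(p_i)$-weighted zero-sum sequence, then $T$ cannot contain exactly one of $a$ and $\hat p_i$, i.e. $a\in T\iff\hat p_i\in T$. Imposing this for every $i$ pins down $T$: if $a\in T$ then every $\hat p_i\in T$, forcing $T=S$; if $a\notin T$ then no $\hat p_i\in T$, forcing $T$ empty, which is excluded. So the only non-empty subsequence that can satisfy all the $p_i$-conditions is $T=S$ itself.

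Finally I would close with the prime-$2$ condition. As $U(2)=\{1\}$, the sequence $S^{(2)}$ is a $U(2)$-weighted zero-sum sequence if and only if $S$ has an even number of odd terms; but $a$ was chosen precisely so that $S$ has an \emph{odd} number of odd terms, so $S^{(2)}$ is not a zero-sum sequence and $T=S$ fails over $2$. Hence no non-empty subsequence of $S$ is a $U(n)$-weighted zero-sum subsequence, and $S$ is a $U(n)$-extremal sequence for the Davenport constant. The conceptual pivot I would state carefully is the application of Observation \ref{obs} to $T$: because $n$ is squarefree, this decouples the conditions across the primes dividing $n$, so the $p_i$-conditions act independently and combine to force $T=S$; everything else is a one-line field computation in each $\Z_{p_i}$ together with a parity count mod $2$.
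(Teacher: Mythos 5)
Your proof is correct and follows essentially the same route as the paper's: reducing modulo each odd prime $p_i$ forces any weighted zero-sum subsequence $T$ to contain both or neither of $a$ and $\hat p_i$ (hence $T=S$), and the parity of the number of odd terms then rules out $S$ itself modulo $2$. Your explicit verification that $S$ has length $D_{U(n)}(n)-1$ is a small addition the paper leaves implicit, but otherwise the arguments coincide.
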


\begin{proof}
Let $n$ and $S$ be as in the statement of the lemma. Suppose $T$ is a $U(n)$-weighted zero-sum subsequence of $S$. Then, the first term of $T$ cannot be $\hat p_i$, for any $i$ with  $1\leq i\leq r$, as all the other terms of $T$ are divisible by $p_i$, if $i\leq r-1$. Suppose the first term of $T$ is $a$. If $T$ does not contain $\hat p_i$, for some $i$ with $1\leq i\leq r$, then, we get the contradiction that $a$ is divisible by $p_i$. Finally, as $S$ has an odd number of odd terms, so, any unit-weighted sum of these terms cannot be even. Thus, $T$ cannot be $S$. 
\end{proof}

The constant $C_A(n)$ has been defined, and the $U(n)$-extremal sequences for that constant have been characterized in \cite{SKS2}, when $n$ is odd. When $n$ is a power of 2, such sequences have been characterized in \cite{SKS5}. It will be interesting to characterize such sequences, for other even values of $n$.

\bigskip

{\bf Acknowledgement.}
Santanu Mondal would like to acknowledge CSIR, Govt. of 
India, for a research fellowship.


\begin{thebibliography}{12}
\bibitem{AMP}
S. D. Adhikari, I. Molla, S. Paul, {\em Extremal Sequences for Some Weighted Zero-Sum Constants for Cyclic Groups}, Combinatorial and Additive Number Theory IV, Springer Proc. in Math. \& Stat., {\bf 347}, (2021), 1-10, \\ 
https://doi.org/10.1007/978-3-030-67996-5\_1

\bibitem{G}
S. Griffiths, {\em The Erd\H{o}s-Ginzberg-Ziv theorem with units},
Discrete Mathematics, {\bf 308}, No. 23, (2008), 5473-5484, \\
https://doi.org/10.1016\%2Fj.disc.2007.09.060

\bibitem{L}
F. Luca, {\em  A generalization of a classical zero-sum problem},
Discrete Math. 307, No. 13, (2007), 1672--1678.

\bibitem{SKS2}
S. Mondal, K. Paul, S. Paul, {\em Extremal sequences for a particular weighted zero-sum constant}, submitted and e-print is available at \\
https://arxiv.org/abs/2111.01018 

\bibitem{SKS3}
S. Mondal, K. Paul, S. Paul, {\em Extremal sequences for the Davenport constant 
related to the Jacobi symbol}, submitted and e-print is available at \\
https://arxiv.org/abs/2111.14477 

\bibitem{SKS5}
S. Mondal, K. Paul, S. Paul, {\em On unit weighted zero-sum constants of $\Z_n$}, submitted and e-print is available at \\
https://arxiv.org/abs/2203.02665



\bibitem{YZ}
P. Yuan, X. Zeng,  Davenport constant with weights, {\it European
Journal of Combinatorics}, {\bf 31} (2010), 677-680.
\end{thebibliography}
\end{document}